\newcommand*{\mytitle}{Sibuya\ copulas} 
\newcommand*{\myauthorone}{Marius\ Hofert}
\newcommand*{\mycontactone}{Department\ of\ Mathematics,\ ETH\ Zurich,\ 8092\ Zurich,\ Switzerland,\ \href{mailto:marius.hofert@math.ethz.ch}{\nolinkurl{marius.hofert@math.ethz.ch}}}
\newcommand*{\mycomment}{The\ author\ (Willis\ Research\ Fellow)\ thanks\ Willis\ Re\ for\ financial\ support\ while\ this\ work\ was\ completed.}
\newcommand*{\myauthortwo}{Frederic\ Vrins}
\newcommand*{\mycontacttwo}{ING Belgium SA,\ Brussels,\ \href{mailto:frederic.vrins@ing.be}{\nolinkurl{frederic.vrins@ing.be}}}
\newcommand*{\mysubject}{Article}
\newcommand\myisodate{\number\year-\ifcase\month\or 01\or 02\or 03\or 04\or 05\or 06\or 07\or 08\or 09\or 10\or 11\or 12\fi-\ifcase\day\or 01\or 02\or 03\or 04\or 05\or 06\or 07\or 08\or 09\or 10\or 11\or 12\or 13\or 14\or 15\or 16\or 17\or 18\or 19\or 20\or 21\or 22\or 23\or 24\or 25\or 26\or 27\or 28\or 29\or 30\or 31\fi}%
\newcolumntype{d}[2]{D{.}{.}{#1.#2}}%
\newcommand*{\abstractnoindent}{}%
\let\abstractnoindent\abstract
\renewcommand*{\abstract}{\let\quotation\quote\let\endquotation\endquote
\abstractnoindent}
\newcommand{\interitemtext}[1]{%
\begin{list}{}
{\itemindent=0mm\labelsep=0mm
\labelwidth=0mm\leftmargin=0mm
\addtolength{\leftmargin}{-\@totalleftmargin}}
\item #1
\end{list}}
\newtheoremstyle{mythmstyle}%
	{0.5em}%
	{0.5em}%
	{}%
	{}%
	{\sffamily\bfseries}%
	{}%
	{\newline}%
	{\thmname{#1}\ \thmnumber{#2}\ \thmnote{(#3)}}%
\newcommand*{\myskipalgo}{~\vspace{-1.5em}}%
\theoremstyle{mythmstyle}
\newtheorem{definition}{Definition}[section]%
\newtheorem{proposition}[definition]{Proposition}
\newtheorem{lemma}[definition]{Lemma}
\newtheorem{theorem}[definition]{Theorem}
\newtheorem{corollary}[definition]{Corollary}
\newtheorem{remark}[definition]{Remark}
\newtheorem{example}[definition]{Example}
\newtheorem{algorithm}[definition]{Algorithm}
\renewcommand*\proofname{Proof}
\renewenvironment{proof}[1][\proofname]{\par
  \pushQED{\qed}%
  \normalfont\topsep2\p@\@plus2\p@\relax
  \trivlist
  \item[\hskip\labelsep
     \sffamily\bfseries #1]\mbox{}\hfill\\*\ignorespaces
}{%
  \popQED\endtrivlist\@endpefalse
}
\newcommand*{\eps}{\varepsilon}
\newcommand{\T}{\ensuremath{^\mathsf{T}}\hspace{-0.5mm}}
\newcommand*{\omu}[3]{\underset{\mathclap{#3}}{\overset{\mathclap{#1}}{#2}}}
\newcommand*{\I}{\mathbbm{1}}
\newcommand*{\IN}{\mathbb{N}}
\newcommand*{\IR}{\mathbb{R}}
\newcommand*{\IP}{\mathbb{P}}
\newcommand*{\IE}{\mathbb{E}}
\newcommand*{\vt}{\vartheta}
\newcommand*{\FTD}{\operatorname*{FTD}}
\newcommand*{\Exp}{\operatorname*{Exp}}
\newcommand*{\Poi}{\operatorname*{Poi}}
\newcommand*{\U}{\operatorname*{U}}
\newcommand*{\Sinv}[3]{{S_{#1}^-(u_{#2}^{\phantom{-}}\hspace{#3})}}
\newcommand*{\textcite}[2][]{\citet[#1]{#2}}
\begin{document}
\thispagestyle{plain}
	\begin{center}
		{\sffamily\bfseries\LARGE\mytitle\par}
		\bigskip
		{\Large\myauthorone\footnote{\mycontactone. \mycomment},\ \myauthortwo\footnote{\mycontacttwo}\par
	    \bigskip
	    \myisodate\par}
	\end{center}
	\par\bigskip
	\begin{abstract}
		The standard intensity-based approach for modeling defaults is generalized by making the deterministic term structure of the survival probability stochastic via a common jump process. The survival copula of the vector of default times is derived and it is shown to be explicit and of the functional form as dealt with in the work of Sibuya. Besides the parameters of the jump process, the marginal survival functions of the default times appear in the copula. Sibuya copulas therefore allow for functional parameters and asymmetries. Due to the jump process in the construction, they allow for a singular component. Depending on the parameters, they may also be extreme-value copulas or L\'evy-frailty copulas. Further, Sibuya copulas are easy to sample in any dimension. Properties of Sibuya copulas including positive lower orthant dependence, tail dependence, and extremal dependence are investigated. An application to pricing first-to-default contracts is outlined and further generalizations of this copula class are addressed.
	\end{abstract}
	\minisec{Keywords}
		Sibuya type distributions, Marshall-Olkin copulas, extreme-value copulas, L\'evy-frailty copulas, default modeling, jump processes.
	\minisec{MSC2010} 
 	60E05, 62H99, 60G99, 62H05, 62H20.%
	\section{Introduction}
		A $d$-dimensional \textit{copula} is a $d$-dimensional distribution function with standard uniform univariate margins. The main goal of the present work is to construct a flexible class of $d$-dimensional copulas based on a multivariate default model and investigate its properties. The multivariate default model considered is a generalization of the standard intensity-based approach by using a common jump process. After presenting this model, the survival copula combining the default times is explicitly derived. It is of the functional form as appearing in \textcite{sibuya1959}. The resulting class of ``Sibuya copulas'' is remarkable in many ways. A first distinguishing feature compared to other copula classes is the fact that Sibuya copulas have functional parameters and therefore quite flexible in terms of its properties. As a second important feature, Sibuya copulas are not restricted to functional symmetry, i.e., exchangeability. This is important in large dimensions when exchangeability becomes a more and more restrictive assumption. Third, due to the construction with a common jump process, Sibuya copulas may have a singular component. This is important for applications such as multivariate default models since it translates to a positive probability that several components of a system default at the same time. Another important feature of large-dimensional copulas in applications is sampling. Due to the construction of Sibuya copulas via a default model, it is easy to draw vectors of random variates from these copulas. 
		\par
		Due to their general functional form, it seems difficult to determine the properties of Sibuya copulas in general. For investigating the properties, we therefore consider a working example throughout the paper. Even for this example, Sibuya copulas are seen to allow for many interesting properties. For example, the considered example is an extreme-value copula and also, as a special case, a L\'evy-frailty copula. Further, the lower and upper extremal-dependence coefficients may be derived explicitly. Moreover, for the bivariate case, the working example is of Marshall-Olkin type. 
		\par
		As an application, we consider the valuation of a financial derivative contract, known as first-to-default swaps. We derive the pricing equation in the proposed dependence framework, and obtain an analytical formula for the fair spread of such type of contracts. Finally, possible extensions of the construction principle for Sibuya copulas are given.
		\par
		The article is organized as follows. Section \ref{sec.int} recalls the standard intensity-based approach for default modeling. A generalization utilizing a common jump process is given in Section \ref{sec.int.gen}. In Section \ref{sec.joint} we compute the joint survival function of the default times. The corresponding Sibuya type copulas are then derived in Section \ref{sec.cop}. In Section \ref{sec.prop} the properties of this distributional class are investigated, including positive lower orthant dependence, tail dependence, extremal dependence, and sampling. Section \ref{sec.app} outlines an application to the pricing of first-to-default contracts, and Section \ref{sec.gen} briefly addresses possible extensions of the construction principle for Sibuya copulas. Finally, Section \ref{sec.concl} concludes. For the reader's convenience, proofs are given in the Appendix.
	\section{A default model and its implied dependence structure}
		In this section, we derive a dependence structure based on a default model in a stochastic intensity framework. First, the standard intensity-based model is reviewed. Next, the natural extension consisting of making the intensities stochastic is considered. The stochastic intensities are restricted to take the form of a non-decreasing deterministic part and a non-decreasing jump process. The latter, being common to all entities, generates the dependence among the constituents.
		\subsection{The standard intensity-based default}\label{sec.int}
			In the standard intensity-based approach, default times are modeled as the first-jump times of a (possibly non-homogeneous) Poisson process. In other words, the default time of the $i$th of $d$ components in a portfolio is modeled by a deterministic, non-negative \textit{intensity} $\tilde{\lambda}_i:[0,\infty)\to[0,\infty)$, $i\in\{1,\dots,d\}$. Given the intensity $\tilde{\lambda}_i$, the \textit{survival probability} $\tilde{p}_i(t)$ of component $i$ until time $t$ is given by
		  \begin{align}
		    \tilde{p}_i(t):=\exp(-\tilde{\Lambda}_i(t)),\ t\in[0,\infty),\label{ptilde}
		  \end{align}
		  for the \textit{integrated rate function} $\tilde{\Lambda}_i(t):=\int_0^t\tilde{\lambda}_i(s)\,ds$, $t\in[0,\infty)$. The canonical construction of the \textit{default time} $\tilde{\tau}_i$ is then given by
		  \begin{align}
		    \tilde{\tau}_i:=\inf\{t\ge0:\tilde{p}_i(t)\le U_i\},\label{tautilde}
		  \end{align}
		  for \textit{triggers} $U_i\sim\U[0,1]$, $i\in\{1,\dots,d\}$, see \textcite[pp.\ 227]{bieleckirutkowski2002} or \textcite[p.\ 122]{schoenbucher2003}. The \textit{survival function} $\tilde{S}_i$ for the $i$th component at time $t$ can now be computed as
		  \begin{align*}
		    \tilde{S}_i(t):=\IP(\tilde{\tau}_i>t)=\IP(\tilde{p}_i(t)\ge U_i)=\tilde{p}_i(t),\ t\in[0,\infty).
		  \end{align*}
		  \par
		  Usually, one seeks for generating default times that are mutually dependent. This is already introduced at various points in the literature, including, e.g., \textcite{li2000}, \textcite{schoenbucherschubert2001}, or \textcite{hofertscherer2010} who introduce dependence by assuming a joint model for the vector of trigger variables $(U_1,\dots,U_d)\T$. The following section we take another approach. We assume the triggers to be independent (see Section \ref{sec.gen} for possible extensions) and introduce dependence by a common jump process.
		\subsection{A generalized default model}\label{sec.int.gen}
			In what follows we generalize Mechanism (\ref{ptilde}) for modeling the term structure of survival probability of an entity by considering Cox processes instead of Poisson processes. The deterministic \textit{survival process} $(\tilde{p}_i(t))_{t\in[0,\infty)}$ is replaced by a stochastic one, i.e.,
		\begin{align}
	    p_i(t):=\exp(-X_{i,t}),\ t\in[0,\infty),\label{p}
	  \end{align}
		where $(X_{i,t})_{t\in[0,\infty)}$, $i\in\{1,\dots,d\}$, are right-continuous, increasing stochastic processes with independent increments and $X_{i,0}=0$, $i\in\{1,\dots,d\}$ that we naturally refer to as the \textit{integrated intensity process} (``IIP''). The idea underlying the stochastic extension (\ref{p}) of (\ref{ptilde}) is that samples of the default time may be generated by a more general model without altering the distribution.
		\par	
		As a realistic model for the processes $(X_{i,t})_{t\in[0,\infty)}$, $i\in\{1,\dots,d\}$, we consider
		\begin{align}
	    	X_{i,t}:=M_i(t)+J_t,\ t\in[0,\infty),\ i\in\{1,\dots,d\},\label{X}
		\end{align}
		where $M_i(t):=\int_0^t\mu_i(s)\,ds$, $t\in[0,\infty)$, with a deterministic function $\mu_i:[0,\infty)\to[0,\infty)$, analogous to $\tilde{\lambda}_i$ before, and $(J_t)_{t\in[0,\infty)}$ is a right-continuous, increasing jump process with independent increments and $J_0=0$.
		\begin{remark}
		The reader may find quite restrictive to focus on IIP of the form given by (\ref{X}). However, it is a rather general form, as we now explain. In order for $X_{i,t}$ to be an IIP, it needs to be almost surely non-decreasing, otherwise the process $p_i(t)$ may not be a \textit{proper survival process}. Therefore, if we restrict $X_{i,t}$ to be right-continuous, with independent increments and stationary, then by definition $X_{i,t}$ is a L\'evy subordinator, which in turns, implies that any IIP satisfying these constraints are necessarily of the form $\mu_it+J_t$ where $\mu_i$ is a non-negative constant, see, e.g., \textcite[p.\ 88]{conttankov2004}. The class of IIP we consider in (\ref{X}) is even more general than L\'evy subordinators in the sense that they can be non-stationary via the (possibly non-linear) functions $M_i(t)$. This shows that the class of IIP defined by (\ref{X}) already covers an important part of the admissible processes.
	   \end{remark}
		The intuition behind (\ref{X}) is that the individual term structure of the survival probability, modeled via $M_i$, is hit by the jump process $(J_t)_{t\in[0,\infty)}$ which models common shocks affecting the components. The default of an entity $i$ is then modeled similar to (\ref{tautilde}) via 
		\begin{align}
		  \tau_i:=\inf\{t\ge0:p_i(t)\le U_i\}\label{tau}
		\end{align}
		for $U_i\sim\U[0,1]$, $i\in\{1,\dots,d\}$, where $\bm{U}$ is assumed to be independent of $(X_{i,t})_{t\in[0,\infty)}$, $i\in\{1,\dots,d\}$. 
		\par
		Note that the default times $\tau_i$, $i\in\{1,\dots,d\}$, are naturally dependent due to the common jump process $(J_t)_{t\in[0,\infty)}$. Our main goal is to investigate this dependence. We first focus on the case where $\bm{U}\sim\U[0,1]^d$, i.e., the dependence is solely induced by $(J_t)_{t\in[0,\infty)}$. An extension to nested or hierarchical dependence structures, as well as dependent triggers is discussed in Sections \ref{sec.hier.jump} and \ref{sec.dep.trig}, respectively.
		\par
		Since the dependence structure resulting from Construction (\ref{p}) is quite general, we consider the following working example throughout this article. 
		\begin{example}[Working example]\label{WE}
			As a working example (``E''), consider $(J_t)_{t\in[0,\infty)}$ to be of the form
			\begin{align*}
				J_t\omu{\text{(E)}}{=}{}HN_t,\ t\in[0,\infty),
			\end{align*}
			for a non-homogeneous Poisson process $(N_t)_{t\in[0,\infty)}$ with $N_0=0$ and $N_t\sim\Poi(\Lambda(t))$, where $\Lambda(t):=\int_0^t\lambda(s)\,ds$ for a deterministic, non-negative function $\lambda:[0,\infty)\to[0,\infty)$ such that $\Lambda(0)=\lim_{t\downarrow0}\Lambda(t)=0$, and where $H\ge0$ is a constant. This choice corresponds to the model of \textcite{hullwhite2008} where the jump size is set constant and equal to $H$.
		\end{example}	
	\subsection{The joint survival function}\label{sec.joint}
		In this section, we derive the \textit{joint survival function} associated to our default model, namely $S(\bm{t}):=\IP(\tau_1>t_1,\dots,\tau_d>t_d)$, $\bm{t}:=(t_1,\dots,t_d)\T\in[0,\infty]^d$. An analytical expression of this survival distribution can be found in \textcite{vrins2010} in the particular case defined by the working example. In what follows, we need the following lemma.
	 	\begin{lemma}\label{lemma}
			Let $a_k\in[0,\infty)$, $k\in\{0,\dots,d\}$, $d\in\IN$, with $a_0:=0$. Further, let $b_k\in\IR$, $k\in\{1,\dots,d\}$, such that $b_{k+1}=cb_k$ for $c\in\IR$ and $k\in\{1,\dots,d-1\}$. \begin{enumerate}[label=(\arabic*),leftmargin=*,align=left,itemsep=0mm,topsep=1mm]
	      \item\label{p.1} $\sum_{k=1}^da_k=\sum_{k=1}^d(d-k+1)(a_{(k)}-a_{(k-1)})$, where $a_{(k)}$ denotes the $k$th order statistic of $a_k$, $k\in\{1,\dots,d\}$, i.e., $a_{(1)}\le\dots\le a_{(d)}$.
	      \item\label{p.2} $\sum_{k=1}^d(1-b_k)(a_{(k)}-a_{(k-1)})=(1-cb_{d})a_{(d)}+(c-1)\sum_{k=1}^{d}b_ka_{(k)}$.
	      \end{enumerate}
	  \end{lemma}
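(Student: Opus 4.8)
The plan is to prove both identities purely by elementary manipulations of finite sums---telescoping together with a single index shift---so nothing about the copula construction is needed. For part~\ref{p.1} I would first use that summation is invariant under permutations, so that $\sum_{k=1}^d a_k=\sum_{k=1}^d a_{(k)}$, and then write each order statistic telescopically as $a_{(k)}=\sum_{j=1}^k(a_{(j)}-a_{(j-1)})$, which is legitimate because $a_{(0)}=a_0=0$. Interchanging the order of summation gives
\[
\sum_{k=1}^d a_{(k)}=\sum_{k=1}^d\sum_{j=1}^k(a_{(j)}-a_{(j-1)})=\sum_{j=1}^d(a_{(j)}-a_{(j-1)})\cdot\bigl|\{k:j\le k\le d\}\bigr|=\sum_{j=1}^d(d-j+1)(a_{(j)}-a_{(j-1)}),
\]
which is the asserted identity after relabeling $j$ as $k$.

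For part~\ref{p.2} I would split the left-hand side as $\sum_{k=1}^d(a_{(k)}-a_{(k-1)})-\sum_{k=1}^d b_k(a_{(k)}-a_{(k-1)})$. The first sum telescopes to $a_{(d)}-a_{(0)}=a_{(d)}$. In the second sum I would expand to $\sum_{k=1}^d b_k a_{(k)}-\sum_{k=1}^d b_k a_{(k-1)}$ and shift the index in the last sum; since $a_{(0)}=0$ the shifted sum equals $\sum_{k=1}^{d-1}b_{k+1}a_{(k)}$, and the hypothesis $b_{k+1}=cb_k$ (valid for $k\in\{1,\dots,d-1\}$) converts it into $c\sum_{k=1}^{d-1}b_k a_{(k)}=c\bigl(\sum_{k=1}^d b_k a_{(k)}-b_d a_{(d)}\bigr)$. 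Collecting the pieces yields $\sum_{k=1}^d b_k(a_{(k)}-a_{(k-1)})=(1-c)\sum_{k=1}^d b_k a_{(k)}+cb_d a_{(d)}$, and substituting this back gives $(1-cb_d)a_{(d)}+(c-1)\sum_{k=1}^d b_k a_{(k)}$, which is exactly the right-hand side.

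I do not expect a genuine obstacle: the only thing requiring care is the bookkeeping of the boundary contributions---the convention $a_{(0)}=0$, the vanishing $k=0$ term produced by the index shift, and re-extending the shifted sum from range $1,\dots,d-1$ to $1,\dots,d$ at the cost of the explicit $b_d a_{(d)}$ term---together with checking that the geometric recursion $b_{k+1}=cb_k$ is only ever invoked on indices where it holds. Conceptually, part~\ref{p.1} is just Abel summation (the discrete analogue of integration by parts), and part~\ref{p.2} is the same maneuver specialized to a geometric weight sequence $b_k$.
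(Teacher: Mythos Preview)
Your proposal is correct and follows essentially the same approach as the paper: both parts are handled by telescoping plus a single index shift, using the convention $a_{(0)}=0$ and the recursion $b_{k+1}=cb_k$ exactly where you indicate. The only cosmetic difference is that for part~\ref{p.1} you write $a_{(k)}$ as a telescoping sum and interchange the order of summation, whereas the paper expands $\sum_{k=1}^d(d-k+1)(a_{(k)}-a_{(k-1)})$ directly and shifts the index in the $a_{(k-1)}$ sum; these are the same Abel-summation maneuver viewed from opposite ends.
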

		\begin{proof}
			The proof is given in \ref{app.lemma}.
		\end{proof}
		The following theorem presents the joint survival function of the default times $\tau_i$, $i\in\{1,\dots,d\}$. Here, $\psi_Y(x)$ denotes the Laplace-Stieltjes transform of the random variable $Y$ at $x$, i.e., $\psi_Y(x):=\IE[\exp(-xY)]$, $x\in[0,\infty]$.
		\begin{theorem}\label{main.theorem}
			For $i\in\{1,\dots,d\}$, let the survival process $p_i$ and the corresponding default time $\tau_i$ be specified as in (\ref{p}), (\ref{X}), and (\ref{tau}), i.e., let 
			\begin{align*}
				\tau_i=\inf\{t\ge0:\exp(-(M_i(t)+J_t))\le U_i\},\ i\in\{1,\dots,d\}.
			\end{align*}
			Then, the joint survival function of the default times is given by
			\begin{align}
				S(\bm{t})=\prod_{i=1}^d\frac{\psi_{J_{t_{(i)}}-J_{t_{(i-1)}}}(d-i+1)}{\psi_{J_{t_{(i)}}}(1)}S_i(t_i),\label{S}
			\end{align}
			where $t_0:=0$ and $S_i(t):=\IP(\tau_i>t)$, $t\in[0,\infty]$, $i\in\{1,\dots,d\}$, denote the \textit{marginal survival functions} corresponding to $S$, given by $S_i(t)=\exp(-M_i(t))\psi_{J_t}(1)$, $t\in[0,\infty]$, $i\in\{1,\dots,d\}$.
		\end{theorem}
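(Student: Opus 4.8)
The plan is to condition on the common jump process $(J_t)_{t\in[0,\infty)}$, use the independence of the triggers to make the conditional joint survival probability factor into the individual survival processes, and then reduce the resulting expectation to a product of Laplace--Stieltjes transforms by means of Lemma~\ref{lemma}\ref{p.1}.

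First I would observe that, since $X_{i,t}=M_i(t)+J_t$ is almost surely non-decreasing and right-continuous in $t$, the survival process $p_i(t)=\exp(-X_{i,t})$ is non-increasing, right-continuous, and $[0,1]$-valued, whence $\inf_{s\le t}p_i(s)=p_i(t)$ and therefore
\begin{align*}
	\{\tau_i>t\}=\{p_i(s)>U_i\text{ for all }s\le t\}=\{U_i<p_i(t)\},\quad i\in\{1,\dots,d\}.
\end{align*}
Put $\mathcal{G}:=\sigma((J_t)_{t\in[0,\infty)})$. Since $\bm{U}\sim\U[0,1]^d$ is independent of $(X_{i,t})_{t\in[0,\infty)}$, $i\in\{1,\dots,d\}$ (hence of $\mathcal{G}$), and each $p_i(t_i)$ is $\mathcal{G}$-measurable with values in $[0,1]$,
\begin{align*}
	\IP(\tau_1>t_1,\dots,\tau_d>t_d\mid\mathcal{G})=\prod_{i=1}^d\IP(U_i<p_i(t_i)\mid\mathcal{G})=\prod_{i=1}^dp_i(t_i)=\exp\Bigl(-\sum_{i=1}^dM_i(t_i)\Bigr)\exp\Bigl(-\sum_{i=1}^dJ_{t_i}\Bigr).
\end{align*}
Taking expectations and pulling out the deterministic factor gives $S(\bm{t})=\exp(-\sum_{i=1}^dM_i(t_i))\,\IE[\exp(-\sum_{i=1}^dJ_{t_i})]$; the same computation applied to a single component yields the claimed marginals $S_i(t)=\exp(-M_i(t))\psi_{J_t}(1)$.

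It remains to evaluate $\IE[\exp(-\sum_{i=1}^dJ_{t_i})]$. Because $J$ is non-decreasing, ordering the time points $t_{(1)}\le\dots\le t_{(d)}$ also orders their images, $J_{t_{(1)}}\le\dots\le J_{t_{(d)}}$; hence, applying Lemma~\ref{lemma}\ref{p.1} pathwise with $a_k=J_{t_k}$ (so that $a_{(k)}=J_{t_{(k)}}$ and $a_{(k)}-a_{(k-1)}=J_{t_{(k)}}-J_{t_{(k-1)}}$, with $t_{(0)}:=0$ and $J_{t_{(0)}}=J_0=0$),
\begin{align*}
	\sum_{i=1}^dJ_{t_i}=\sum_{k=1}^d(d-k+1)\bigl(J_{t_{(k)}}-J_{t_{(k-1)}}\bigr).
\end{align*}
The increments $J_{t_{(k)}}-J_{t_{(k-1)}}$, $k\in\{1,\dots,d\}$, live on the disjoint intervals $(t_{(k-1)},t_{(k)}]$ and are therefore independent since $J$ has independent increments, so
\begin{align*}
	\IE\Bigl[\exp\Bigl(-\sum_{i=1}^dJ_{t_i}\Bigr)\Bigr]=\prod_{k=1}^d\IE\bigl[\exp(-(d-k+1)(J_{t_{(k)}}-J_{t_{(k-1)}}))\bigr]=\prod_{k=1}^d\psi_{J_{t_{(k)}}-J_{t_{(k-1)}}}(d-k+1).
\end{align*}

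Combining the last two displays, $S(\bm{t})=\exp(-\sum_{i=1}^dM_i(t_i))\prod_{i=1}^d\psi_{J_{t_{(i)}}-J_{t_{(i-1)}}}(d-i+1)$. Finally I would check that this matches (\ref{S}): inserting $S_i(t_i)=\exp(-M_i(t_i))\psi_{J_{t_i}}(1)$ into the right-hand side of (\ref{S}) and using $\prod_{i=1}^d\psi_{J_{t_i}}(1)=\prod_{i=1}^d\psi_{J_{t_{(i)}}}(1)$, the factors $\psi_{J_{t_{(i)}}}(1)$ cancel the denominators and one is left precisely with the expression just obtained. The only genuinely substantive step is the reduction of $\sum_iJ_{t_i}$ to a weighted sum of the \emph{independent} increments of $J$ along the ordered times via Lemma~\ref{lemma}\ref{p.1}, since this is what makes the expectation factor into Laplace--Stieltjes transforms; everything else is conditioning on $\mathcal{G}$ and bookkeeping.
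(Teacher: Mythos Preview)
Your proof is correct and follows essentially the same route as the paper: condition on the path of $(J_t)_{t\ge0}$, use independence of the triggers to factor into $\prod_i p_i(t_i)$, take expectations, and then invoke Lemma~\ref{lemma}\ref{p.1} together with the independent-increments property of $J$ to factor $\IE[\exp(-\sum_i J_{t_i})]$ into the product of Laplace--Stieltjes transforms. The only differences are cosmetic---you condition on $\sigma((J_t)_{t\ge0})$ rather than on the finite family $\{J_{t_i}\}$, and you are slightly more explicit about why $\{\tau_i>t\}=\{U_i<p_i(t)\}$ and why $a_{(k)}=J_{t_{(k)}}$---but the argument is the same.
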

		\begin{proof}
			The proof is given in \ref{app.main}.
		\end{proof}
		We may infer from (\ref{S}) that the joint survival function $S$ is of a form as dealt with in \textcite{andersonlouisholmharvald1992}. It is parameterized by the corresponding marginal survival functions $S_i$, $i\in\{1,\dots,d\}$, and the common jump process $(J_t)_{t\in[0,\infty)}$. The following corollary presents the functional form of $S$ for the case of our working example. 
		\begin{corollary}[Working example]
			Let us consider the setup of the working example, i.e., Example \ref{WE}. In this case, the marginal survival functions are given by
			\begin{align*}
				S_i(t)\omu{\text{(E)}}{=}{}\exp\bigl(-(M_i(t)+\Lambda(t)(1-e^{-H}))\bigr).
			\end{align*}
			Since the non-homogeneous Poisson process $(N_t)_{t\in[0,\infty)}$ has increment distribution $N_{t_{(i)}}-N_{t_{(i-1)}}\sim\Poi(\Lambda(t_{(i)})-\Lambda(t_{(i-1)}))$ and since the Laplace-Stieltjes transform of $Y\sim\Poi(\gamma)$ equals $\psi_Y(x):=\exp\bigl(-\gamma(1-\exp(-x))\bigr)$, $x\in[0,\infty)$, $\gamma\in(0,\infty)$, we obtain
			\begin{align}
				S(\bm{t})%
				&\omu{\text{(E)}}{=}{}\exp\biggl(-\sum_{i=1}^d(1-e^{-H(d-i+1)})(\Lambda(t_{(i)})-\Lambda(t_{(i-1)}))+(1-e^{-H})\sum_{i=1}^d\Lambda(t_{(i)})\biggr)\notag\\
				&\phantom{\omu{\text{(E)}}{=}{}{}}\cdot\prod_{i=1}^dS_i(t_i).\label{WE.S}
			\end{align}
			By applying Lemma \ref{lemma} \ref{p.1} with $a_i:=\Lambda(t_i)$, $i\in\{1,\dots,d\}$, (\ref{WE.S}) can be simplified to
			  \begin{align}
			     S(\bm{t})\omu{\text{(E)}}{=}{}\prod_{i=1}^d\varphi(d-i+1,H,\Lambda(t_{(i)})-\Lambda(t_{(i-1)}))S_i(t_i),\label{WE.S.1}
			  \end{align}
			  where $\varphi(x,y,z):=\exp\bigl(-z(1-e^{-xy}-x(1-e^{-y}))\bigr)$, $x,y,z\in[0,\infty)$ denotes the \textit{jointure function} as introduced in \textcite{vrins2010}; note that the function $(1-e^{-xy}-x(1-e^{-y}))$ is non-positive for $x\in[1,\infty)$, $y\in[0,\infty)$, and it is decreasing in $y$ for any fixed $x\in[1,\infty)$. Further, by applying Lemma \ref{lemma} \ref{p.2} with $c:=e^H$, $a_i:=\Lambda(t_i)$, and $b_i:=e^{-H(d-i+1)}$, $i\in\{1,\dots,d\}$, the joint survival function in (\ref{WE.S}) can also be expressed as
			  \begin{align}
			      S(\bm{t})\omu{\text{(E)}}{=}{}\exp\biggl((1-e^{-H})\sum_{i=1}^{d}(1-e^{-H(d-i)})\Lambda(t_{(i)})\biggr)\prod_{i=1}^dS_i(t_i).\label{WE.S.2}
			  \end{align}
		\end{corollary}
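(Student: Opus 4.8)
The plan is to specialize Theorem~\ref{main.theorem} to the Poisson setting of Example~\ref{WE} and then apply the two parts of Lemma~\ref{lemma} to rewrite the resulting exponent. The only genuine analytic input beyond bookkeeping is the Laplace--Stieltjes transform of $J_t$: since $J_t=HN_t$ with $N_t\sim\Poi(\Lambda(t))$, we have $\psi_{J_t}(x)=\IE[\exp(-xHN_t)]=\psi_{N_t}(xH)=\exp(-\Lambda(t)(1-e^{-xH}))$ for $x\in[0,\infty)$, using the stated formula for the Poisson transform. Evaluating the marginal identity $S_i(t)=\exp(-M_i(t))\psi_{J_t}(1)$ from Theorem~\ref{main.theorem} at $x=1$ then yields $S_i(t)=\exp(-(M_i(t)+\Lambda(t)(1-e^{-H})))$, which is the first claim.

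For the joint survival function I would substitute into (\ref{S}). Because $(N_t)_{t\in[0,\infty)}$ has independent increments with $N_{t_{(i)}}-N_{t_{(i-1)}}\sim\Poi(\Lambda(t_{(i)})-\Lambda(t_{(i-1)}))$, the numerator transforms become $\psi_{J_{t_{(i)}}-J_{t_{(i-1)}}}(d-i+1)=\exp(-(\Lambda(t_{(i)})-\Lambda(t_{(i-1)}))(1-e^{-(d-i+1)H}))$, while the denominators contribute $\psi_{J_{t_{(i)}}}(1)^{-1}=\exp(\Lambda(t_{(i)})(1-e^{-H}))$. Multiplying over $i\in\{1,\dots,d\}$ and pulling out the factor $\prod_i S_i(t_i)$ gives exactly (\ref{WE.S}).

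It remains to simplify the exponent of the prefactor in (\ref{WE.S}). The key point is that $\Lambda$ is non-decreasing (as $\lambda\ge0$), so $\Lambda(t_{(i)})$ is precisely the $i$th order statistic of $\Lambda(t_1),\dots,\Lambda(t_d)$; this is what legitimizes applying Lemma~\ref{lemma} with $a_i:=\Lambda(t_i)$. Using Lemma~\ref{lemma}~\ref{p.1}, the term $(1-e^{-H})\sum_{i=1}^d\Lambda(t_{(i)})$ can be rewritten as $(1-e^{-H})\sum_{i=1}^d(d-i+1)(\Lambda(t_{(i)})-\Lambda(t_{(i-1)}))$; combining this with the remaining sum over $i$ collapses the exponent to $-\sum_{i=1}^d(\Lambda(t_{(i)})-\Lambda(t_{(i-1)}))\bigl(1-e^{-(d-i+1)H}-(d-i+1)(1-e^{-H})\bigr)$, and recognizing the bracket as the argument of the jointure function $\varphi(d-i+1,H,\cdot)$ produces (\ref{WE.S.1}) (the side remark on $\varphi$ follows from $f(y):=1-e^{-xy}-x(1-e^{-y})$ satisfying $f(0)=0$ and $f'(y)=x(e^{-xy}-e^{-y})\le0$ for $x\ge1$, $y\ge0$). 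Alternatively, applying Lemma~\ref{lemma}~\ref{p.2} with $c:=e^H$, $a_i:=\Lambda(t_i)$, $b_i:=e^{-H(d-i+1)}$ (note $b_{i+1}=cb_i$ and $cb_d=1$, so the boundary term $(1-cb_d)a_{(d)}$ vanishes) gives $\sum_{i=1}^d(1-e^{-H(d-i+1)})(\Lambda(t_{(i)})-\Lambda(t_{(i-1)}))=(e^H-1)\sum_{i=1}^de^{-H(d-i+1)}\Lambda(t_{(i)})$; substituting this into (\ref{WE.S}) and using the elementary identity $(e^H-1)e^{-H(d-i+1)}=(1-e^{-H})e^{-H(d-i)}$ turns the exponent into $(1-e^{-H})\sum_{i=1}^d(1-e^{-H(d-i)})\Lambda(t_{(i)})$, which is (\ref{WE.S.2}).

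The argument is almost entirely routine algebra; the one point requiring a word of care is the monotonicity of $\Lambda$ that identifies $\Lambda(t_{(i)})$ with the order statistics of the $\Lambda(t_j)$, since otherwise Lemma~\ref{lemma} could not be invoked in the form needed.
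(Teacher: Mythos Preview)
Your proposal is correct and follows exactly the route the paper indicates within the corollary statement itself: specialize Theorem~\ref{main.theorem} via the Poisson Laplace--Stieltjes transform to obtain (\ref{WE.S}), then invoke Lemma~\ref{lemma}~\ref{p.1} and \ref{p.2} with the stated choices of $a_i$, $b_i$, $c$ to reach (\ref{WE.S.1}) and (\ref{WE.S.2}). Your explicit observation that $cb_d=1$ kills the boundary term in Lemma~\ref{lemma}~\ref{p.2}, and your remark that monotonicity of $\Lambda$ is what identifies $\Lambda(t_{(i)})$ with the $i$th order statistic of the $\Lambda(t_j)$, are both correct and make explicit points the paper leaves implicit.
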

		\subsection{The implied dependence structure}\label{sec.cop}
			With the joint survival function and the corresponding marginal survival functions at hand, one can derive the copula which provides a link between these two pieces of the multivariate default model. 
		\begin{corollary}
			Let $\bm{u}:=(u_1,\dots,u_d)\T\in[0,1]^d$ and let $S_i$, $i\in\{1,\dots,d\}$, be given as in Theorem \ref{main.theorem}. Further, let $S_i^-$ denote the generalized inverse corresponding to $S_i$, $i\in\{1,\dots,d\}$, and let $\Sinv{\cdot\,}{\cdot\,}{-1.3mm}_{(i)}$ denote the $i$th order statistic of $S_i^-(u_i)$, $i\in\{1,\dots,d\}$, i.e., $\Sinv{\cdot\,}{\cdot\,}{-1.3mm}_{(1)}\le\dots\le\Sinv{\cdot\,}{\cdot\,}{-1.3mm}_{(d)}$. The copula $C$ corresponding to the joint survival function (\ref{S}) is then given by
		  \begin{align}
		    C(\bm{u})=S(S_1^-(u_1),\dots,S_d^-(u_d))=\prod_{i=1}^d\frac{\psi_{J_{\Sinv{\cdot\,}{\cdot\,}{-1.3mm}_{(i)}}-J_{\Sinv{\cdot\,}{\cdot\,}{-1.3mm}_{(i-1)}}}(d-i+1)}{\psi_{J_{\Sinv{\cdot\,}{\cdot\,}{-1.3mm}_{(i)}}}(1)}u_i.\label{C} 
		  \end{align}
		  	Since $S_i^-(t)\le S_j^-(t)$, $t\in[0,\infty)$, if and only if $S_i(t)\le S_j(t)$, $t\in[0,\infty)$, the diagonal corresponding to (\ref{C}) is given by
		 \begin{align*}
		    C(u,\dots,u)=u^d\prod_{i=1}^d\frac{\psi_{J_{S_{(i)}^-(u)}-J_{S_{(i-1)}^-(u)}}(d-i+1)}{\psi_{J_{S_{(i)}^-(u)}}(1)}.
		 \end{align*}
		\end{corollary}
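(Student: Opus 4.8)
The plan is to read $C$ off the explicit joint survival function (\ref{S}) of Theorem~\ref{main.theorem} via the classical Sklar-type substitution for survival functions, with the generalized inverses doing the bookkeeping for the possible non-strict monotonicity of the margins.

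First I would invoke Sklar's theorem in its survival form: since $S$ is a $d$-dimensional survival function with univariate marginal survival functions $S_1,\dots,S_d$, there exists a copula $C$---the survival copula of $\bm\tau$---with $S(\bm t)=C(S_1(t_1),\dots,S_d(t_d))$ for all $\bm t\in[0,\infty]^d$. Each $S_i(t)=\exp(-M_i(t))\psi_{J_t}(1)$ is non-increasing and continuous on $[0,\infty]$ with $S_i(0)=1$, so $S_i(S_i^-(u_i))=u_i$ on the range of $S_i$, and the usual convention for the generalized inverse $S_i^-$ together with right-continuity of $S_i$ handles the boundary levels; substituting $t_i:=S_i^-(u_i)$ into Sklar's identity then yields $C(\bm u)=S(S_1^-(u_1),\dots,S_d^-(u_d))$ for all $\bm u\in[0,1]^d$, which is the first equality in (\ref{C}).

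Next I would plug $t_i:=S_i^-(u_i)$, $i\in\{1,\dots,d\}$, straight into the product (\ref{S}). Under this substitution the order statistics $t_{(i)}$ of $(t_1,\dots,t_d)$ become the order statistics $\Sinv{\cdot\,}{\cdot\,}{-1.3mm}_{(i)}$ of $(S_1^-(u_1),\dots,S_d^-(u_d))$, while each marginal factor $S_i(t_i)$ collapses to $S_i(S_i^-(u_i))=u_i$; collecting the factors gives precisely the second equality in (\ref{C}). As a consistency check I would read the uniform margins directly off (\ref{C}): fixing $u_j=1$ for all $j\ne i$ forces $S_j^-(1)=0$, hence $d-1$ of the order statistics vanish, every factor of the $\psi$-product equals $1$ (using $J_0=0$), and (\ref{C}) reduces to $\prod_j u_j=u_i$, as required of a copula.

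Finally, for the diagonal I would set $\bm u=(u,\dots,u)$: the numbers to be ordered are then $S_1^-(u),\dots,S_d^-(u)$, and---since $S_i^-(u)\le S_j^-(u)$ is equivalent to the corresponding ordering of $S_i$ and $S_j$ at the relevant argument---the $i$th of them may be written $S_{(i)}^-(u)$, with the convention $S_{(0)}^-(u):=0$; inserting this into (\ref{C}) and pulling out the $d$ factors $u$ produces $C(u,\dots,u)=u^d\prod_{i=1}^d\psi_{J_{S_{(i)}^-(u)}-J_{S_{(i-1)}^-(u)}}(d-i+1)/\psi_{J_{S_{(i)}^-(u)}}(1)$. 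I expect the only genuinely delicate point to be the handling of the generalized inverses---ensuring $S_i\circ S_i^-$ behaves as needed on all of $[0,1]$, that (\ref{C}) is independent of the chosen version of $S_i^-$, and that its right-hand side is an honest copula rather than merely a subcopula---all of which rest on the continuity and monotonicity of the marginal survival functions established in Theorem~\ref{main.theorem}.
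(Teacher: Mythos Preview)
Your proposal is correct and matches the paper's approach: the paper does not give a separate proof of this corollary at all, treating it as an immediate consequence of Theorem~\ref{main.theorem} via the substitution $t_i:=S_i^-(u_i)$ in (\ref{S}), which is exactly what you do. Your added checks---Sklar's survival form, the collapse $S_i(S_i^-(u_i))=u_i$, the verification of uniform margins, and the care about generalized inverses---are more explicit than anything the paper provides and are all sound.
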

	  \begin{remark}
	  		The copula $C$ is in fact the survival copula of the vector of default times $(\tau_1,\dots,\tau_d)\T$. From the form above one recognizes that $C$ is of the form of a class of distributions introduced by \textcite{sibuya1959}, who called 
	  \begin{align*}
	    		\prod_{i=1}^d\frac{\psi_{J_{\Sinv{\cdot\,}{\cdot\,}{-1.3mm}_{(i)}}-J_{\Sinv{\cdot\,}{\cdot\,}{-1.3mm}_{(i-1)}}}(d-i+1)}{\psi_{J_{\Sinv{\cdot\,}{\cdot\,}{-1.3mm}_{(i)}}}(1)} 
	  \end{align*}
		the \textit{dependence function} of the joint survival function $S(\bm{t})$. We therefore refer to class of copulas $C$ as given in (\ref{C}) as \textit{Sibuya copulas}.
	   \end{remark}
		\par
		The idea of constructing a copula via a multivariate default model was recently applied by \textcite{maischerer2009a}. In their work, $(X_{i,t})_{t\in[0,\infty)}$ takes the form $(\Lambda_{h_i(t)})_{t\in[0,\infty)}$, $i\in\{1,\dots,d\}$, for a common L\'evy subordinator $(\Lambda_t)_{t\in[0,\infty)}$ combined with a rescaling of the time-clock via functions $h_i$, $i\in\{1,\dots,d\}$. This results in a non-stationary, non-decreasing stochastic process $(X_{i,t})_{t\in[0,\infty)}$. The rescalings $h_i$ are monotonically increasing entity-dependent functions derived from the riskiness of the entities, i.e., the subordinator time is passing more rapidly for riskier entities so that the survival process at some standard time point is lower for riskier entities than for safer ones. This approach also results in a tractable dependence model for defaults. However, the derived copula is restricted to functional symmetry, also known as exchangeability. This drawback is shared by many copula classes including the class of Archimedean copulas and also, partly, by nested Archimedean copulas. It becomes a more and more restrictive assumption in large dimensions since it implies that, e.g., all bivariate margins of the copula are equal. This, in turn, implies that, e.g., it is not possible to construct asymmetric (non-exchangeable) joint distributions if all margins are identical. Such restrictive properties are rarely observed, especially for large-dimensional portfolios. Sibuya copulas do not suffer from this drawback. 
		\begin{example}[Working example]
			From (\ref{WE.S.1}) and (\ref{WE.S.2}) we may infer that the Sibuya copula $C$ in the setup of the working example is given by
			\begin{align*}
				C(\bm{u})&\omu{\text{(E)}}{=}{}\prod_{i=1}^d\varphi\bigl(d-i+1,H,\Lambda(\Sinv{\cdot\,}{\cdot\,}{-1.3mm}_{(i)})-\Lambda(\Sinv{\cdot\,}{\cdot\,}{-1.3mm}_{(i-1)})\bigr)\prod_{i=1}^du_i\\
				&=\exp\biggl((1-e^{-H})\sum_{i=1}^{d}(1-e^{-H(d-i)})\Lambda(\Sinv{\cdot\,}{\cdot\,}{-1.3mm}_{(i)})\biggr)\prod_{i=1}^du_i.
			\end{align*}
			If $H=0$, then $C(\bm{u})=\Pi(\bm{u}):=\prod_{i=1}^d u_i$ is the \textit{independence copula}. In the bivariate case, $C$ can be written as
			\begin{align}
			      C(u_1,u_2)\omu{\text{(E)}}{=}{}&\exp((1-e^{-H})^2\Lambda(\Sinv{\cdot\,}{\cdot\,}{-1.3mm}_{(1)}))u_1u_2\notag\\
			=&\exp((1-e^{-H})^2\Lambda(\min\{S_1^-(u_1),S_2^-(u_2)\}))u_1u_2\label{C.2d}
			\end{align}
			From this functional form one may derive that $C$ allows for a singular component, given by $\{\bm{u}\in[0,1]^2\,\vert\,S_1^-(u_1)=S_2^-(u_2)\}$.
		\end{example}  
	\section{Properties of the copula}\label{sec.prop}
	    In this section we investigate some properties of the copula $C$. We present results about positive lower orthant dependence, tail dependence, extremal dependence, and sampling. Due to the quite general form of a Sibuya copula $C$, see (\ref{C}), it is difficult to investigate tail and extremal dependence, even under (E), i.e., for the case of the working example. In Section \ref{tail} and \ref{extreme}, we therefore work out the details under the additional assumption (``A''), which means $M_i(t)=\mu_it$, $i\in\{1,\dots,d\}$, and $\Lambda(t)=\lambda t$, $t\in[0,\infty)$, where $\mu_i$ and $\lambda$ are non-negative constants. 
		\par
		Let us first explore this case a bit. Under (A), the generalized inverses of the marginal survival functions $S_i$, $i\in\{1,\dots,d\}$, are given explicitly by $S_i^{-}(u)=-\log(u)/\lambda_i$, where we define $\lambda_i:=\mu_i+\lambda(1-e^{-H})$, $i\in\{1,\dots,d\}$, for convenience. In this setup, a Sibuya copula can be expressed as
		  \begin{align}
		    C(\bm{u})&\omu{\text{(A)}}{=}{}\prod_{i=1}^du_i\varphi\bigl(d-i+1,H,\log(u_{\cdot}^{-\lambda/\lambda_{\cdot}})_{(i)}-\log(u_{\cdot}^{-\lambda/\lambda_{\cdot}})_{(i-1)}\bigr)\notag\\
		    &=\prod_{i=1}^{d}u_i(u_{\cdot}^{-\lambda/\lambda_{\cdot}})_{(i)}^{(1-e^{-H})(1-e^{-H(d-i)})},\label{C.A}
		  \end{align}
		  with $(u_{\cdot}^{-\lambda/\lambda_{\cdot}})_{(0)}:=1$. The corresponding diagonal is given by
			  \begin{align}
			      C(u,\dots,u)&\omu{\text{(A)}}{=}{}u^d\prod_{i=1}^d\varphi(d-i+1,H,\log u^{-\lambda(1/\lambda_{[i]}-1/\lambda_{[i-1]})})\notag\\
			      &=u^{d-\lambda(1-e^{-H})\sum_{i=1}^{d}(1-e^{-H(d-i)})/\lambda_{[i]}},\label{C.A.diag}
			  \end{align}
			  i.e., a power function, where the subscript $\lambda_{[i]}$ stands for the $i$th largest of $\lambda_1,\dots,\lambda_d$ with $\lambda_{[0]}:=\infty$.
		\par
		The following remark addresses several properties of this copula.
		\begin{remark}
			\begin{enumerate}[label=(\arabic*),leftmargin=*,align=left,itemsep=0mm,topsep=1mm]
				\item Assuming $\mu_i\in(0,\infty)$, $i\in\{1,\dots,d\}$, if $H=0$ or $\lambda=0$, then $C$ is the independence copula $\Pi$. Further, if $H\uparrow\infty$ and $\lambda\uparrow\infty$, then $C$ becomes the upper Fr\'{e}chet bound copula $M$. We therefore conclude that Sibuya copulas allow to capture the full range of positive lower orthant dependence.
				\item The Sibuya copula $C$ as given in (\ref{C.A}) allows for asymmetries and therefore more realistic dependence structures, especially in large dimensions.
				\item Also note that this Sibuya copula is max-stable and therefore an extreme-value copula, see, e.g., \textcite[pp.\ 95]{nelsen2007}, hence Sibuya copulas can be extreme-value copulas.
				\item In the homogeneous case, i.e., $\mu_i=:\mu$ for all $i\in\{1,\dots,d\}$, (\ref{C.A}) can be written as
						\begin{align*}
							C(\bm{u})&=\prod_{i=1}^du_i(u_{[i]})^{-c(1-e^{-H(d-i)})}=\biggl(\,\prod_{i=1}^du_{(i)}\biggr)\biggl(\,\prod_{i=1}^d(u_{[i]})^{-c(1-e^{-H(d-i)})}\biggr)\\
							&=\prod_{i=1}^du_{(i)}^{1-c(1-e^{-H(d-i)})},
						\end{align*}
						where $c:=\lambda(1-e^{-H})/(\mu+\lambda(1-e^{-H}))$, and the last equation follows by changing the order of multiplication in the second product of the second last equation. Thus, under (A), homogeneous Sibuya copulas are L\'evy-frailty copulas, see \textcite{maischerer2009b}.
				\item In the bivariate case, (\ref{C.A}) becomes
					  \begin{align*}
					      C(u_1,u_2)\omu{\text{(A)}}{=}{}\min\{u_1^{-\lambda/\lambda_1},u_2^{-\lambda/\lambda_2}\}^{(1-e^{-H})^2}u_1u_2=\min\{u_1^{1-\vt_1}u_2,u_1u_2^{1-\vt_2}\},
					  \end{align*}
					  where $\vt_i:=(1-e^{-H})^2\lambda/\lambda_i\in[0,1]$, i.e. a Marshall-Olkin copula, see \textcite{marshallolkin1967}. Note that in this case, one has explicit formulas for Spearman's rho and Kendall's tau, see, e.g., \textcite{embrechtslindskogmcneil2001}, for the tail-dependence coefficients \textcite[p.\ 215]{nelsen2007}, as well as for the probability of falling on the singular component, see, e.g., \textcite[p.\ 54]{nelsen2007}.
			\end{enumerate}
		\end{remark}
	\subsection{Positive lower orthant dependence}
		It follows directly from Equation (\ref{C}) that $C(\bm{u})\ge\prod_{i=1}^du_i$, i.e., that $C$ is \textit{positive lower orthant dependent}, see \textcite[p.\ 21]{joe1997}. For the bivariate case this property is also called \textit{positive quadrant dependence} and it implies, by definition, that all measures of concordance such as Spearman's rho, Kendall's tau are greater than or equal to zero. By Equation (\ref{C}), the dependence function directly controls this dependence since
		\begin{align}
			\frac{\IP(U_1\le u_1,\dots,U_d\le u_d)}{\IP(U_1\le u_1)\cdots\IP(U_d\le u_d)}=\prod_{i=1}^d\frac{\psi_{J_{\Sinv{\cdot\,}{\cdot\,}{-1.3mm}_{(i)}}-J_{\Sinv{\cdot\,}{\cdot\,}{-1.3mm}_{(i-1)}}}(d-i+1)}{\psi_{J_{\Sinv{\cdot\,}{\cdot\,}{-1.3mm}_{(i)}}}(1)}\ge 1.\label{bad.news}
		\end{align}		
		Since the left-hand side of (\ref{bad.news}) can be written as
		\begin{align*}
			\frac{\IP(U_j\le u_j\,\vert\,U_1\le u_1,\dots,U_{j-1}\le u_{j-1},U_{j+1}\le u_{j+1},\dots,U_d\le u_d)}{\IP(U_j\le u_j)}
		\end{align*}
		for any $j\in\{1,\dots,d\}$, one also says that the copula $C$ has the ``bad news propagation'' effect.
	\subsection{Tail dependence}\label{tail}
	    We pursue with a bivariate notion of association known as tail dependence. For $X_i\sim F_i$, $i\in\{1,2\}$, with joint copula $C$, the \textit{lower} and \textit{upper tail-dependence coefficient} $\lambda_l$ and $\lambda_u$, respectively, are given by
	    \begin{align*}
	        \lambda_l&:=\lim_{u\downarrow0}\IP(X_2\le F_2^-(u)\,\vert\,X_1\le F_1^-(u))=\lim_{u\downarrow0}\frac{C(u,u)}{u},\\
	        \lambda_u&:=\lim_{u\uparrow1}\IP(X_2>F_2^-(u)\,\vert\,X_1>F_1^-(u))=\lim_{u\uparrow1}\frac{1-2u+C(u,u)}{1-u},
	    \end{align*}
	    where the limits are assumed to exist. By definition, the lower, respectively upper, tail-dependence coefficient tells us the likelihood, in the limit, that $X_1$ and $X_2$ are both small, respectively large, simultaneously. Note that if $(U_1,U_2)\T\sim C$, then $(1-U_1,1-U_2)\T\sim\hat{C}$, the survival copula corresponding to $C$. Therefore, the lower and upper tail-dependence coefficients interchange when going from $C$ to its survival copula $\hat{C}$.
	    \par
	    In our case, $C$ is the survival copula of the vector of default times $(\tau_1,\tau_2)\T$. Thus, the lower, respectively upper, tail-dependence coefficient tells us the likelihood, in the limit, that the two default times $\tau_1,\tau_2$ are jointly large, respectively small. So upper tail dependence means that a joint default model with a Sibuya copula as dependence structure produces joint defaults within a short amount of time.
	  	\par
		If one assumes only the setup of the working example, then one can at least say that
	 	\begin{align*}
	       \lambda_l\omu{\text{(E)}}{=}{}\lim_{u\downarrow0}u\exp((1-e^{-H})^2\Lambda(\min\{S_1^-(u),S_2^-(u)\})).
	    \end{align*}
	    Thus, if at least one of the individual survival functions $S_i(t)$ is zero at some finite $t$, then $\lambda_l=0$. Further, if $\Lambda$ is bounded, then $\lambda_l=0$. Under (A), $C$ is a Marshall-Olkin copula. Thus, the lower and upper tail-dependence coefficients are given by 
		\begin{align*}
			\lambda_l\omu{\text{(A)}}{=}{}0\ \text{and}\ \lambda_u\omu{\text{(A)}}{=}{}\min\{\vt_1,\vt_2\}=\frac{(1-e^{-H})^2\lambda}{\max\{\mu_1,\mu_2\}+\lambda(1-e^{-H})},
		\end{align*}
		respectively, see, e.g., \textcite[p.\ 215]{nelsen2007}. It follows that $H\downarrow0$ or $\lambda\downarrow0$ or $\max\{\mu_1,\mu_2\}\uparrow\infty$ implies $\lambda_u\downarrow0$. So if we suppose, in the limit, that entity $i\in\{1,2\}$ is extremely safe, i.e., $\lambda_i=\mu_i+\lambda(1-e^{-H})\downarrow0$, then also $\lambda_u\downarrow0$. Further, if $H\uparrow\infty$ and $\lambda\uparrow\infty$ or if $H\uparrow\infty$ and $\mu_i\downarrow0$, $i\in\{1,2\}$, then $\lambda_u\uparrow1$.
		\subsection{Extremal dependence}\label{extreme}
	    The notion of extremal dependence was introduced by \textcite{frahm2006}. For $X_i\sim F_i$, $i\in\{1,\dots,d\}$, with joint copula $C$, the \textit{lower} and \textit{upper extremal-dependence coefficient} $\lambda_l$ and $\lambda_u$, respectively, are given by
	    \begin{align*}
	        \eps_l:&=\lim_{u\downarrow0}\IP(\max\limits_{1\le i\le d}\{F_i(X_i)\}\le u\,\vert\min\limits_{1\le i\le d}\{F_i(X_i)\}\le u)\\
					&=\lim_{u\downarrow0}\frac{C(u,\dots,u)}{1-\hat{C}(1-u,\dots,1-u)},\\
	        \eps_u:&=\lim_{u\uparrow1}\IP(\min\limits_{1\le i\le d}\{F_i(X_i)\}>u\,\vert\max\limits_{1\le i\le d}\{F_i(X_i)\}>u)\\
				&=\lim_{u\uparrow1}\frac{\hat{C}(1-u,\dots,1-u)}{1-C(u,\dots,u)},
	    \end{align*}
	    where the limits are assumed to exist. By definition, the lower (upper) extremal-dependence coefficient tells us the likelihood, in the limit, that the largest (smallest) value of $F_i(X_i)$, $i\in\{1,\dots,d\}$, is small (large) given that the smallest (largest) value is. Applied to the setup where $C$ is the survival copula of the default times, this means that the lower (upper) extremal-dependence coefficient tells us the likelihood that the smallest (largest) default time is large (small) given that the largest (smallest) is. Thus, given that the first default happened within a short amount of time, the upper extremal-dependence coefficient tells us the likelihood of all other defaults also happening within a short amount of time. The following proposition gives explicit formulas for $\eps_l$ and $\eps_u$ under (A).
	    \begin{proposition}\label{prop.extreme}
	      Under (A), the lower and upper extremal-dependence coefficients are given by
	      \begin{align*}
	          \eps_l&\omu{\text{(A)}}{=}{}0,\\
	          \eps_u&\omu{\text{(A)}}{=}{}\sum_{I\subseteq\{1,\dots,d\}}(-1)^{\vert I\vert+1}\frac{\vert I\vert-\lambda(1-e^{-H})\sum_{i=\vert I\vert+1}^d(1-e^{-H(d-i)})/\lambda_{[i]}}{d-\lambda(1-e^{-H})\sum_{i=1}^d(1-e^{-H(d-i)})/\lambda_{[i]}},
	      \end{align*}  
	      respectively, where the sum extends over all $2^d$ subsets $I$ of $\{1,\dots,d\}$.
	    \end{proposition}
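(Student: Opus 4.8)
The plan for $\eps_l$ is to combine the diagonal (\ref{C.A.diag}) with a crude lower bound on its denominator. Set $\alpha:=d-\lambda(1-e^{-H})\sum_{i=1}^d(1-e^{-H(d-i)})/\lambda_{[i]}$, so that $C(u,\dots,u)=u^\alpha$ by (\ref{C.A.diag}). Since $\lambda_{[i]}\ge\lambda(1-e^{-H})$ for every $i$ and $1-e^{-H(d-i)}<1$ for $i\in\{1,\dots,d-1\}$, each of the (at most) $d-1$ nonzero terms in the sum defining $\alpha$ is strictly below $1$, hence $\alpha>1$. On the other hand, with $(U_1,\dots,U_d)\T\sim C$ we have $1-\hat C(1-u,\dots,1-u)=\IP\bigl(\bigcup_{i=1}^d\{U_i\le u\}\bigr)\ge\IP(U_1\le u)=u$, so $C(u,\dots,u)/\bigl(1-\hat C(1-u,\dots,1-u)\bigr)\le u^{\alpha-1}\to0$ as $u\downarrow0$, which gives $\eps_l=0$.

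For $\eps_u$ I would treat the numerator and denominator of the defining ratio separately. The denominator is immediate: $1-C(u,\dots,u)=1-u^\alpha$, so $(1-C(u,\dots,u))/(1-u)\to\alpha$ as $u\uparrow1$. For the numerator, note $\hat C(1-u,\dots,1-u)=\IP(U_1>u,\dots,U_d>u)$, and inclusion--exclusion over the events $\{U_i\le u\}$ gives
\begin{align*}
	\hat C(1-u,\dots,1-u)=\sum_{I\subseteq\{1,\dots,d\}}(-1)^{|I|}C_I(u,\dots,u),
\end{align*}
where $C_I$ is the $|I|$-dimensional margin of $C$ and $C_\emptyset\equiv1$. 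The key structural point is that each $C_I$ is again a Sibuya copula: fixing $t_j=0$ for $j\notin I$ in the joint survival function of Theorem \ref{main.theorem} makes the corresponding increments of $(J_t)_{t\in[0,\infty)}$ vanish, so $C_I$ coincides with the $|I|$-dimensional Sibuya copula of the entities in $I$ (equivalently, $C_I$ is the survival copula of $(\tau_i)_{i\in I}$). Hence (\ref{C.A.diag}), applied in dimension $|I|$ with the subfamily $\{\lambda_i:i\in I\}$, yields $C_I(u,\dots,u)=u^{\alpha_I}$ for an explicit exponent $\alpha_I$, with $\alpha_{\{1,\dots,d\}}=\alpha$ and $\alpha_\emptyset=0$.

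A first-order expansion at $u=1$ then completes the computation. Writing $C_I(u,\dots,u)=1-\alpha_I(1-u)+\O((1-u)^2)$ and using $\sum_{I\subseteq\{1,\dots,d\}}(-1)^{|I|}=(1-1)^d=0$ to kill the constant term,
\begin{align*}
	\hat C(1-u,\dots,1-u)=(1-u)\sum_{I\subseteq\{1,\dots,d\}}(-1)^{|I|+1}\alpha_I+\O((1-u)^2).
\end{align*}
Dividing by $1-C(u,\dots,u)\sim\alpha(1-u)$ and letting $u\uparrow1$ (recall $\alpha>1>0$) gives $\eps_u=\alpha^{-1}\sum_I(-1)^{|I|+1}\alpha_I$; inserting the explicit exponents $\alpha_I$ from (\ref{C.A.diag}) and simplifying the alternating sum then produces the stated formula. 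The main obstacle is exactly this final step: one must track how the ordering of the $\lambda_i$ restricts to each subset $I$ when reading off $\alpha_I$, and then verify that $\sum_I(-1)^{|I|+1}\alpha_I$ collapses to the asserted closed form---the binomial identities $\sum_m(-1)^m\binom{d}{m}=0$ and $\sum_m(-1)^m m\binom{d}{m}=0$ (for $d\ge2$) are what force the non-leading contributions to cancel. The rest is routine.
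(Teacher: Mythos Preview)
Your $\eps_l$ argument is correct and a touch cleaner than the paper's: rather than compute $\hat C(1-u,\dots,1-u)$ explicitly you bound the denominator below by $u$, which suffices once $\alpha>1$.

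For $\eps_u$ the architecture---sieve formula for $\hat C$ on the diagonal, followed by a linearisation at $u=1$---agrees with the paper's (which phrases the last step as l'H\^opital). The substantive difference is in how $C_I(u,\dots,u)$ is evaluated. You pass to the $|I|$-dimensional Sibuya copula of the subfamily $\{\lambda_i:i\in I\}$ and read off its diagonal exponent $\alpha_I$ from (\ref{C.A.diag}); this exponent involves the order statistics of the \emph{restricted} set $\{\lambda_i:i\in I\}$ and hence depends on the particular subset $I$, not merely on $|I|$. The paper instead substitutes $u_j=u^{\I_I(j)}$ directly into the $d$-dimensional expression (\ref{C.A}) and sorts the values $u^{-\lambda\I_I(j)/\lambda_j}$ there, arriving at an exponent $|I|-c\sum_{i=|I|+1}^d(1-e^{-H(d-i)})/\lambda_{[i]}$ written in terms of the \emph{global} ordering $\lambda_{[1]}\ge\dots\ge\lambda_{[d]}$ and depending on $I$ only through $|I|$; after l'H\^opital this is exactly the proposition.

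This is where your sketch has a genuine gap. The ``simplification'' you defer is not routine: your $\alpha_I$ vary across subsets of the same cardinality, so your expression $\alpha^{-1}\sum_I(-1)^{|I|+1}\alpha_I$ and the proposition's sum (whose summands depend only on $|I|$) are not term-by-term equal. Reconciling them requires an identity relating alternating sums of \emph{subset} order statistics of the $\lambda_i$ to the global $\lambda_{[i]}$'s; the two binomial identities you cite dispose of the $|I|$ contribution but say nothing about this $\lambda$-dependent piece. To close the argument you must either establish that identity directly, or follow the paper and compute $C(u^{\I_I(1)},\dots,u^{\I_I(d)})$ from (\ref{C.A}) in the ambient dimension so that the global $\lambda_{[i]}$'s appear from the outset and no post-hoc collapse is needed.
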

	 	\begin{proof}
			The proof is given in \ref{app.prop}.
		\end{proof}
	  \subsection{Sampling}
	  The intuitive construction principle of Sibuya copulas via a default model can be used for simulation. Principally, we have to simulate the vector $\bm{\tau}:=(\tau_1,\dots,\tau_d)\T$ of individual default times and then return $(S_1(\tau_1),\dots,S_d(\tau_d))\T$, a vector of random variates from the Sibuya copula $C$. Sampling a vector $\bm{\tau}$ involves drawing a vector $\bm{U}\sim\U[0,1]^d$ and sampling a path of the jump process $(J_t)_{t\in[0,T]}$, where $T$ is such that $p_i(T)\le U_i$ for all $i\in\{1,\dots,d\}$. Then, $\bm{\tau}$ is determined. Note that the number of occurrences to be sampled from the jump process depends on the given trigger variates $U_i$, $i\in\{1,\dots,d\}$, as well as on the deterministic functions $M_i$, $i\in\{1,\dots,d\}$. The following algorithm describes the general sampling procedure of $C$.
	  \begin{algorithm}\label{algo}
			\myskipalgo		
			\linespread{1.22}\normalfont
			\begin{tabbing}
				\hspace{8mm} \= \hspace{4mm} \= \hspace{4mm} \= \hspace{4mm} \= \hspace{12mm} \= \kill
				(1) \> \textbf{sample} $U_i\sim\U[0,1]$, $i\in\{1,\dots,d\}$\\
				(2) \> $t_{h,0}:=0$, $t_0:=0$, $k:=1$, and $I_k:=\{1,\dots,d\}$\\
				(3) \> \textbf{repeat} \{\\
				(4) \>\> \textbf{sample} the $k$th occurence $t_k$ of the jump process $(J_t)_{t\in[0,\infty)}$\\
				(5) \>\> \textbf{find} $I\subseteq I_k:i\in I\ \Leftrightarrow\ U_i\ge p_i(t_k)=\exp(-(M_i(t_k)+J_{t_k}))$\\
				(6) \>\> \textbf{for} $i\in I$ \{ \>\>\> \# find $\tau_i$ for all $i\in I$\\
				(7) \>\>\> \textbf{if} $\bigl(U_i\le p_i(t_k-)=\exp(-(M_i(t_k)+J_{t_{k-1}}))\bigr)$ $\tau_i:=t_k$\\ %
				(8) \>\>\> \textbf{else} \{ \>\> \# $U_i\in(p_i(t_k-),p_i(t_{k-1}))$\\ %
				(9) \>\>\>\> \textbf{find} $\tau_i$ on $(t_{k-1},t_k)$ via $\tau_i:=M_i^-(-\log U_i-J_{t_{k-1}})$\\
				(10) \>\>\> \}\\
				(11) \>\> \} \\
				(12) \>\> $I_{k+1}:=I_k\backslash I$ \>\>\> \# indices $i$ for which $\tau_i$ have not been determined yet\\
				(13) \>\> \textbf{if} ($I_{k+1}=\emptyset$) \textbf{break}\\
				(14) \>\> \textbf{else} $k:=k+1$\\
				(15) \> \}\\
				(16) \> \textbf{return} $(S_1(\tau_1),\dots,S_d(\tau_d))\T$
			\end{tabbing}
	  \end{algorithm}
	  \par
	  Under (E), $(J_t)_{t\in[0,\infty)}$ is given by $J_t=HN_t$, $t\in[0,\infty)$, for $N_t\sim\Poi(\Lambda(t))$. In this case we have $J_{t_k}=Hk$, $k\in\{1,2,\dots\}$. Further, Step (4) of Algorithm \ref{algo} can be achieved with the following algorithm, see, e.g., \textcite[p.\ 257]{devroye1986}.
		\begin{algorithm}\label{algo.Poi}
			\myskipalgo		
			\linespread{1.22}\normalfont
			\begin{tabbing}
				\hspace{8mm} \= \hspace{4mm} \= \hspace{26mm} \= \kill
				(1)	\>\textbf{sample} $E\sim\Exp(1)$\\
				(2)	\> $t_{h,k}:=t_{h,k-1}+E$ \>\> \# $k$th occurrence of a homogeneous Poisson\\
				(3) 	\>\>\> \# process with unit intensity\\
				(4)	\> $t_k:=\Lambda^-(t_{h,k})$ \>\> \# $k$th occurrence of a non-homogeneous Poisson\\
				(5) 	\>\>\> \# process with integrated rate function $\Lambda$\\
			\end{tabbing}
		\end{algorithm}
		Note that under (A), Step (9) of Algorithm \ref{algo} boils down to setting $\tau_i:=(-\log U_i-H(k-1))/\mu_i$ and Step (4) of Algorithm \ref{algo.Poi} to $t_k:=t_{h,k}/\lambda$.
	  \begin{example}\label{ex.sample}
			Let us consider the copula $C$ as given in (\ref{C.2d}). Since under (A), the bivariate $C$ is a Marshall-Olkin copula, we consider a more general example here. For this, let the ``intensities'' $\lambda$ and $\mu_i$, $i\in\{1,\dots,d\}$, be linear (instead of constant), i.e., let
			\begin{align*}
				\lambda(s)=a_\lambda s+b_\lambda,\quad\mu_i(s)=a_is+b_i,\ i\in\{1,\dots,d\},
			\end{align*}
			where $a_\lambda,b_\lambda,a_i,b_i\in[0,\infty)$, $i\in\{1,\dots,d\}$. Further, let us assume the non-trivial case where not both $a_\lambda$ ($a_i$) and $b_\lambda$ ($b_i$) are zero simultaneously. Letting $c:=(1-e^{-H})$ we obtain
			\begin{align*}
				\Lambda(t)&=a_\lambda t^2/2+b_\lambda t,\quad M_i(t)=a_it^2/2+b_it,\notag\\
				p_i(t)&=\exp(-(a_it^2/2+b_it+HN_t)),\notag\\
				S_i(t)&=\exp(-((a_i+ca_\lambda)t^2/2+(b_i+cb_\lambda)t))
			\end{align*}
			for $i\in\{1,\dots,d\}$. The corresponding inverses are
			\begin{align*}
				\Lambda^-(t)&=t/b_\lambda\I_{\{a_\lambda=0\}}+(\sqrt{b^2+2at}-b)/a\I_{\{a_\lambda>0\}},\\
				M_i^-(t)&=t/b_i\I_{\{a_i=0\}}+(\sqrt{b^2+2at}-b)/a\I_{\{a_i>0\}},\\
				S_i^-(u)&=\frac{\sqrt{(b_i+cb_\lambda)^2-2(a_i+ca_\lambda)\log(u)}-(b_i+cb_\lambda)}{a_i+ca_\lambda}.
			\end{align*}		
			Note that if $t_k$ denotes the $k$th occurrence of the non-homogeneous Poisson $(N_t)_{t\in[0,\infty)}$ process with integrated rate function $\Lambda$, then
			\begin{align*}
				p_i^-(t_k)&=\exp(-(a_it_k^2/2+b_it_k+Hk)),\\
				p_i^-(t_k-)&=\exp(-(a_it_k^2/2+b_it_k+H(k-1))).
			\end{align*}
		With these quantities one can evaluate the copula $C$ and apply Algorithm \ref{algo} for drawing vectors of random variates from $C$. Figures \ref{fig1} and \ref{fig2} show two examples. Note that although we choose linear intensities, the resulting structures are quite different, e.g., see the shape of the singular components. Further, one may again infer that these copulas are able to capture highly asymmetric structures.  
		\begin{figure}[htbp]
		 	\centering
		   \includegraphics[width=0.52\textwidth]{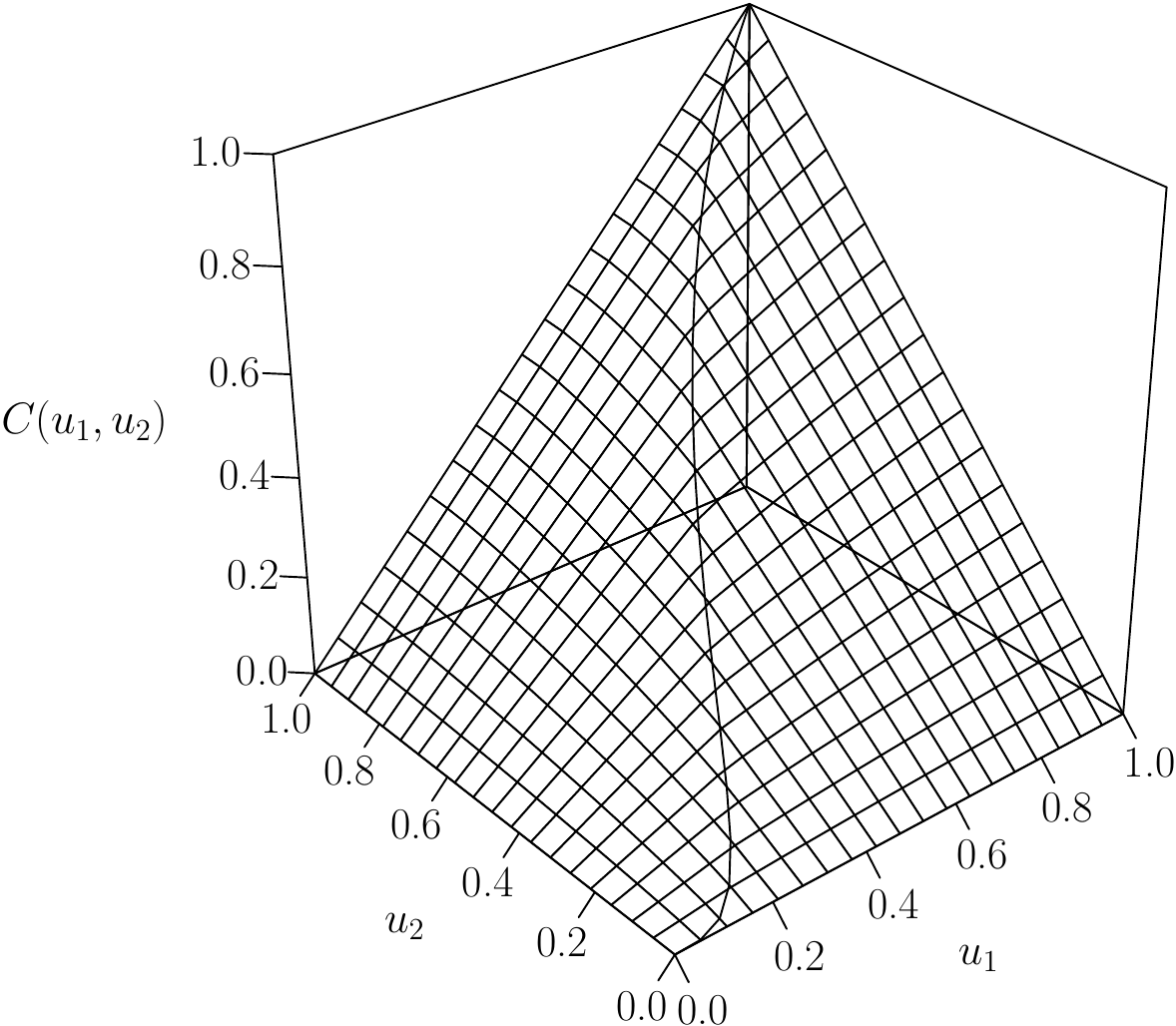}%
		   \hfill
		 	\includegraphics[width=0.43\textwidth]{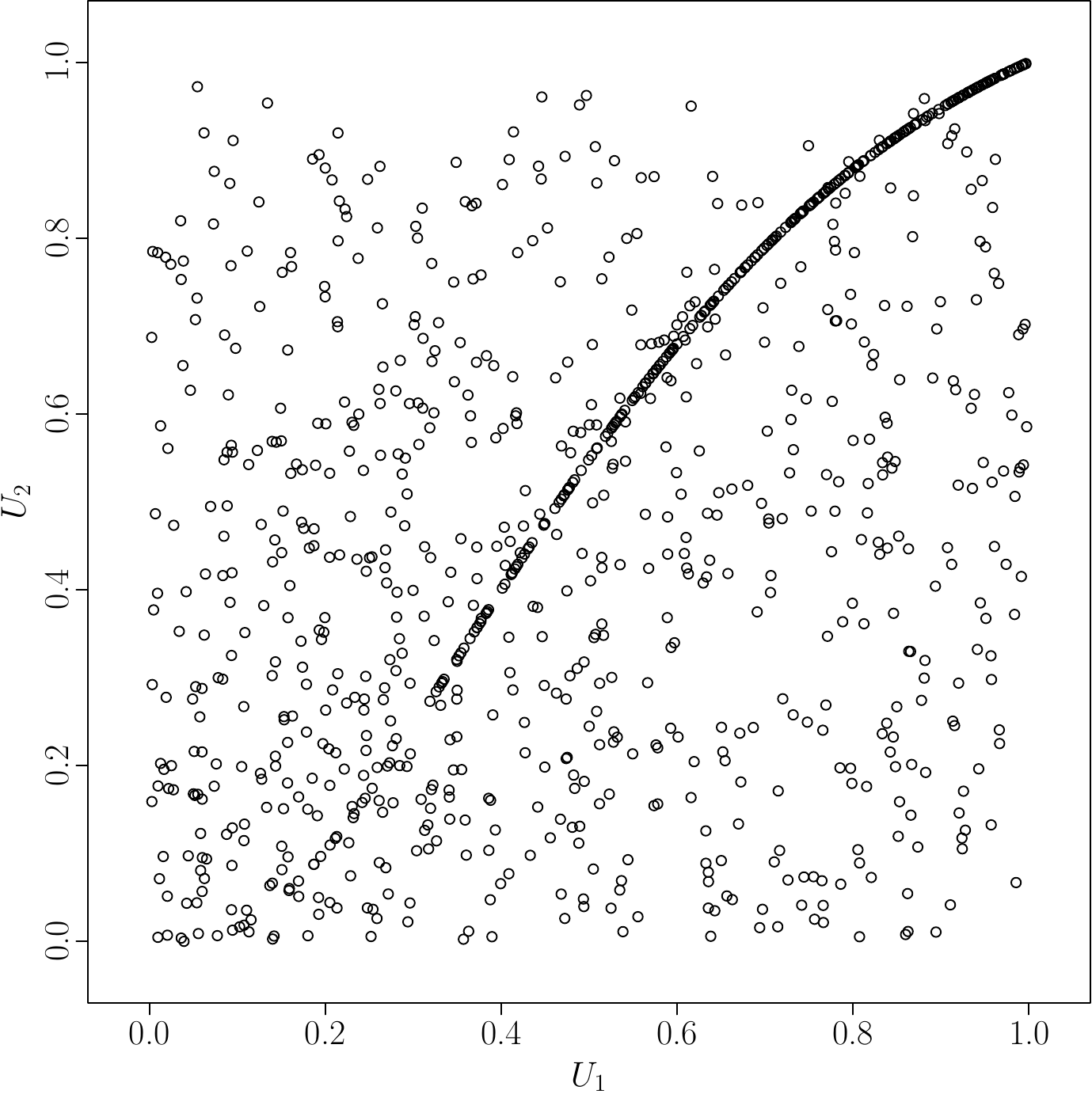}%
		 	\setcapwidth{\textwidth}%
		 	\caption{Sibuya Copula (\ref{C.2d}) and singular component for Example (\ref{ex.sample}) with $H=10$, $a_\lambda=0.1$, $b_\lambda=4$, $\bm{a}=(a_1,a_2)\T=(1,100)\T$, and $\bm{b}=(b_1,b_2)\T=(5,0)\T$ (left). $1\,000$ generated vectors of random variates from this copula (right). The lower and upper tail-dependence coefficients are given by $\lambda_l=0$ and $\lambda_u=0.44$, respectively.} 
		 	\label{fig1}
		\end{figure}
		\begin{figure}[htbp]
		 	\centering
		   \includegraphics[width=0.52\textwidth]{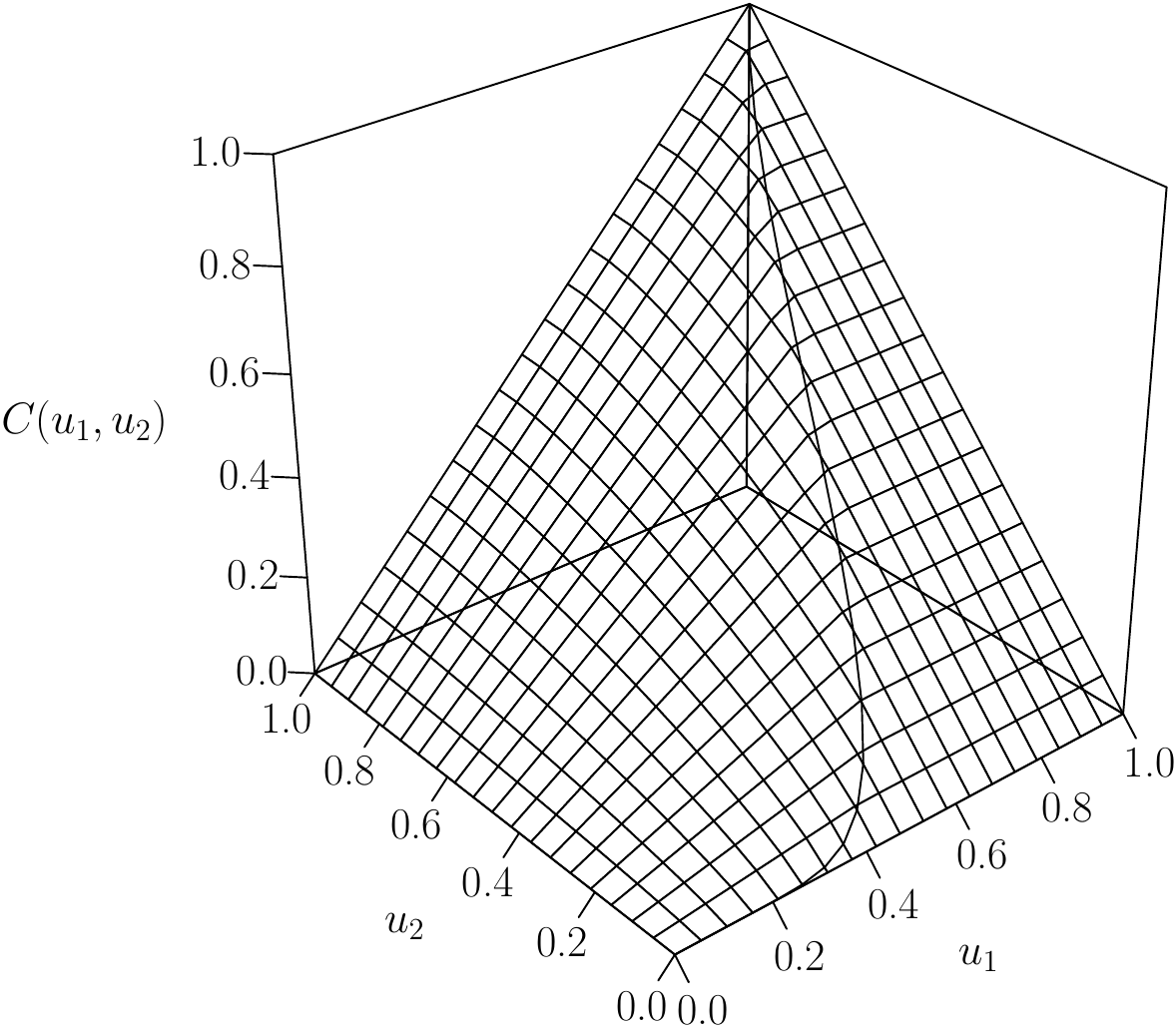}%
		   \hfill
		 	\includegraphics[width=0.43\textwidth]{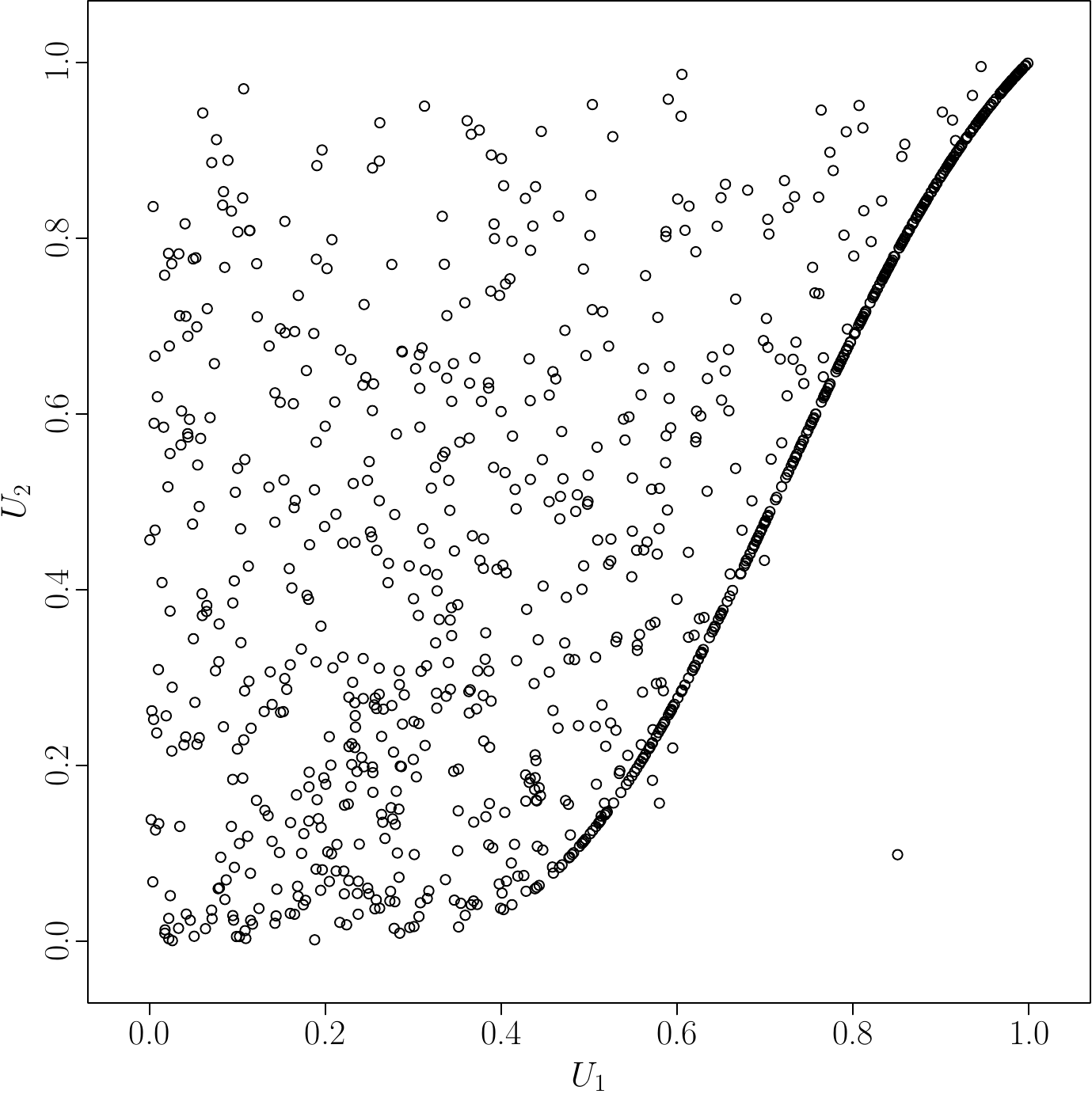}%
		 	\setcapwidth{\textwidth}%
		 	\caption{Sibuya Copula (\ref{C.2d}) and singular component for Example (\ref{ex.sample}) with $H=10$, $a_\lambda=0.1$, $b_\lambda=4$, $\bm{a}=(a_1,a_2)\T=(1,100)\T$, and $\bm{b}=(b_1,b_2)\T=(0,0)\T$ (left). $1\,000$ generated vectors of random variates from this copula (right). The lower and upper tail-dependence coefficients are given by $\lambda_l=0$ and $\lambda_u=1$, respectively.} 
		 	\label{fig2}
		\end{figure}
	  \end{example}
	\section{An application to the pricing of first-to-default contracts}\label{sec.app}
		In order to illustrate how a joint model with Sibuya dependence structure can be applied, we consider a financial application, consisting in the valuation of contracts known as \textit{first-to-default} (FTD) swaps. The key element of this contract is a basket, i.e. a pool of $d>1$ entities. The first party, named \textit{protection seller} agrees to pay the contractual counterparty known as \textit{protection buyer} a fraction of the contract notional if the first default in the basket happens before the maturity of the contract. The fraction, named as \textit{loss given default}, is defined by one minus the recovery rate of the defaulted entity. In order to enjoy that protection, the \textit{protection buyer} is willing to pay the \textit{protection seller} a given premium at prespecified payment dates (usually quarterly IMM), up to the maturity of the deal or the first default time, whichever comes first. After the first default in the basket or at the maturity, the latest, the contract stops. 
		\par
		Mathematically, risk-neutral pricing theory proves that the above two payment legs are, per unit notional, equal to $\IE[\exp(-r\tau_{(1)})(1-R_{(1)})\I_{\{\tau_{(1)}\le T\}}]$ (the \textit{default leg}) and $s\IE[\int_0^T\exp(-rt)\I_{\{\tau_{(1)}>t\}}\,dt]$ (the \textit{premium leg}; for simplicity, we assume continuous premium payments), where $\tau_{(1)}:=\min\{\tau_1,\dots,\tau_d\}$ with distribution function $F_{\tau_{(1)}}$, $R_i$ denotes the recovery rate of the $i$th entity (with index ``(1)'' denoting the recovery rate of the first defaulting entity in the basket), $s$ is the agreed \textit{spread} (running annual premium), $r$ is the (assumed constant) risk-free interest rate used for discounting the cashflows, and $T$ is the contract's maturity in years. A basic calculation leads to the present value of such a contract from the protection buyer's perspective, given by
		\begin{align*}
			\sideset{}{_T}\FTD&=\IE\Bigl[\exp(-r\tau_{(1)})(1-R_{(1)})\I_{\{\tau_{(1)}\le T\}}\Bigr]-s\IE\Bigl[\int_0^T\exp(-rt)\I_{\{\tau_{(1)}>t\}}\,dt\Bigr]\\
			&=(1-R_{(1)})\int_0^T\exp(-rt)\,dF_{\tau_{(1)}}(t)-s\int_0^T\exp(-rt)(1-F_{\tau_{(1)}}(t))\,dt.
		\end{align*}
		\par
		Another product, known as credit default swaps (``CDS'') allows one to derive risk-neutral default probability curves from market prices. This provides us with $d$ univariate default distributions $1-\tilde{S}_i(t)$, $t\in[0,\infty]$, $i\in\{1,\dots,d\}$. However, this is not enough to infer $F_{\tau_{(1)}}$, the distribution of the first default time $\tau_{(1)}$. The survival distribution of $\tau_{(1)}$ is given by the diagonal of the joint survival distribution, which we cannot infer solely based on the $d$ univariate statistics at our disposal. In order to fill that gap, one needs to assume a dependence structure between the default times, while preserving the (market implied) individual survival probabilities. Here we achieve this with the help of our derived Sibuya copula class. In the following, we shall see how one can derive the FTD default distribution. Interestingly, in our framework, the later will be shown to admit a closed form expression; this is remarkable noting that this is not the case even for the ``simple'' Gaussian copula model. Since the purpose of this paper is not to discuss how the copula parameters can be calibrated and in order to avoid entering the technical details related to the application, we assume them as given (e.g., by an expert).
		\par
		For simplicity, assume the individual default times have the same survival function $\tilde{S}(t)=\exp(-\tilde{\lambda}t)$ calibrated to CDS market quotes. Technically, this corresponds either to a one-point or to a flat CDS spread curve. For the joint dependence structure of the vector $\bm{\tau}$ of default times, we use the Sibuya copula $C$ as given in (\ref{C.A}). Its diagonal is given in (\ref{C.A.diag}). Applying Integration by Parts, $\sideset{}{_T}\FTD$ can be computed as
		\begin{align*}
			\sideset{}{_T}\FTD&=(1-R_{(1)})\bigl(1-\exp(-rT)C(\tilde{S}(T),\dots,\tilde{S}(T))\bigr)\\
			&\phantom{={}}-((1-R_{(1)})r+s)\int_0^TC(\tilde{S}(t),\dots,\tilde{S}(t))\exp(-rt)\,dt,
		\end{align*}
		so that the \textit{fair spread}, obtained by solving $\sideset{}{_T}\FTD=0$ with respect to $s=:s^\ast$, is given by 	
		\begin{align*}
			s^\ast=(1-R_{(1)})\biggl(\frac{1-e^{-rT}C(\tilde{S}(T),\dots,\tilde{S}(T))}{\int_0^TC(\tilde{S}(t),\dots,\tilde{S}(t))\exp(-rt)\,dt}-r\biggr).
		\end{align*}
		Writing $\beta:=d-\lambda(1-e^{-H})\sum_{i=1}^{d}(1-e^{-H(d-i)})/\lambda_{[i]}$ for the exponent appearing in (\ref{C.A.diag}), we have $C(\tilde{S}(t),\dots,\tilde{S}(t))=\exp(-\beta\tilde{\lambda}t)$, so that
		\begin{align}
			s^\ast=(1-R_{(1)})\beta\tilde{\lambda}.\label{s.star}
		\end{align}
		\par
		Now let us consider the attainable FTD spreads. The boundary of the $(H,\lambda)$-space, i.e., $H=0$ or $\lambda=0$, corresponds to independence, see (\ref{C.A}). In this case $\beta$ takes its largest value ($\beta=d$), so does the spread in (\ref{s.star}). The lowest attainable FTD spread is obtained for the smallest value of $\beta$ ($\beta=1$), which, in turn, is obtained by letting $H\uparrow\infty$ and $\lambda\uparrow\infty$. The corresponding limiting copula is seen to be the upper Fr\'{e}chet bound $C(\bm{u})=M(\bm{u})$. This corresponds to maximum correlation where all entities default simultaneously when the first jump of the process $(J_t)_{t\in[0,\infty)}$ takes place.
		\par
		An example of FTD spread level curves is shown in Figure \ref{fig3}.
		\begin{figure}[htbp]
	   	\centering
	   	\includegraphics[width=0.8\textwidth]{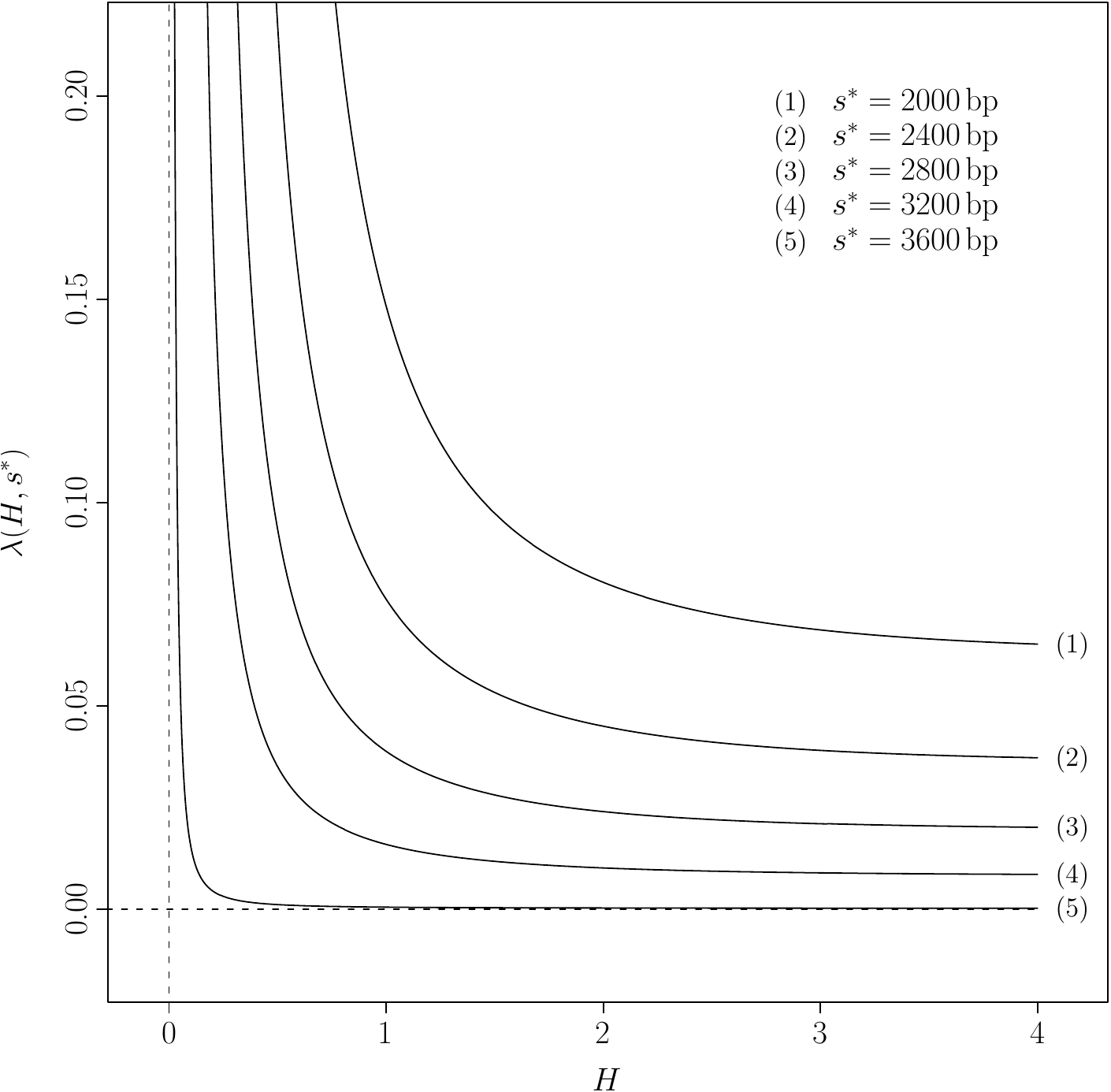}%
	   	\setcapwidth{0.8\textwidth}%
	   	\caption{We consider a basket consisting of five entities with individual default intensities $\tilde{\lambda}_i$, $i\in\{1,\dots,5\}$, given by 0.05, 0.133, 0.133, 0.142, and 0.145, respectively. Further, we take $\tilde{\lambda}$ to be the mean and $\mu_i$ to be the minimum of these values, i.e., $\tilde{\lambda}=0.1206$, $\mu_i=0.05$, $i\in\{1,\dots,d\}$. We also take $R=40\%$. The dotted lines indicate the boundary of the $(H,\lambda)$-space. Each point on the remaining curves represents the corresponding FTD spread level $s^\ast$ for the associated parameter values $H$ and $\lambda(H,s^\ast)$. The level curves are simple, and an optimization on such a space is nicely behaved.} 
	   	\label{fig3}
	   \end{figure}	
		\begin{remark}
			The joint default model which led to the Sibuya copula is built on a general default model, assuming stochastic default intensities of some entities. Those should not be confused with the deterministic intensities of the FTD basket constituents, bootstrapped from the CDS market. In other words, there is no obvious link between the functions $S^-_i$, $i\in\{1,\dots,d\}$, involved in the copula definition (and solely used to build the dependence function) and the univariate survival functions of the constituents of the basket, which will be plugged in the copula in order for the joint survival function to have the market-implied margins. However, in the particular case where one sets the model parameters $S^-_i$, $i\in\{1,\dots,d\}$, such that the functions $S_i$ are equal to the (CDS-based) survival probability curves $\tilde{S}_i$, $i\in\{1,\dots,d\}$, then the link becomes obvious, namely each realization of the stochastic survival process $p_i(t)$ can be seen as a possible default probability curve for the $i$th entity, to which some probability is assigned, and such that $\IE[p_i(t)]=\tilde{S}_i(t)$. Thus, the intuitive framework based on random intensities translates to the basket constituents only in this case.
		\end{remark}
	\section{Generalizations}\label{sec.gen}
		A key feature of our model is that it can be easily extended without affecting its tractability. In this section we briefly investigate some generalizations of the copula construction via the default model specified in (\ref{p}), (\ref{X}), and (\ref{tau}) and show that it remains perfectly workable. Although a lot of such extensions are possible (for instance, the scalar $H$ could be made name-specific so that the jumps are rescaled in a name-specific fashion, or replacing the jump process by a sum of jump processes each with possibly different scaling coefficients, etc.), we restrict ourselves to analyze two possible generalizations. First, we replace the single jump process $(J_t)_{t\in[0,\infty)}$ by a hierarchy of jump processes $(J_{j,t})_{t\in[0,\infty)}$, $j\in\{1,\dots,J\}$. Then, we address the case of dependent trigger variables $\bm{U}$.
		\subsection{Generalization to hierarchical jump processes}\label{sec.hier.jump}
		The jump process $(J_t)_{t\in[0,\infty)}$ is fatal in the sense that it hits all components of the default model simultaneously. In practical applications, it might be the case that only certain subgroups of components get hit. Such a hierarchical or sectorial behavior may be modeled via (possibly dependent) jump processes $(J_{j,t})_{t\in[0,\infty)}$ for $j\in\{1,\dots,J\}$, where $J$ is the number of sectors or subgroups. These subgroups often arise naturally from the application considered, e.g., by given industry sectors, macroeconomic effects, geographical regions, political decisions, or consumer trends. A default model incorporating such hierarchies can be constructed with the stochastic processes $(X_{i,t})_{t\in[0,\infty)}$, $i\in\{1,\dots,d\}$, in (\ref{X}) being replaced by
			\begin{align*}
		    	X_{ji,t}:=M_i(t)+J_{j,t},\ t\in[0,\infty),\ j\in\{1,\dots,J\},\ i\in\{1,\dots,d_j\}.
			\end{align*}
			The corresponding individual survival processes are then given by $p_{ji}(t):=\exp(-X_{ji,t})$, $t\in[0,\infty]$, and the default time of entity $i$ in sector $j$ by $\tau_{ji}:=\inf\{t\ge0:p_{ji}(t)\le U_{ji}\}$, where $U_{ji}\sim\U[0,1]$, independent for all $j\in\{1,\dots,J\}$, $i\in\{1,\dots,d_j\}$. For simplicity, we assume the jump processes $(J_{j,t})_{t\in[0,\infty)}$, $j\in\{1,\dots,J\}$, to be independent in what follows. Then the joint survival function $S$ can be derived similarly as in the proof of Theorem \ref{main.theorem}. First note that the individual survival functions $S_{ji}$ are given by $S_{ji}(t)=\exp(-M_i(t))\psi_{J_{j,t}}(1)$, $t\in[0,\infty]$, $j\in\{1,\dots,J\}$, $i\in\{1,\dots,d_j\}$. The joint survival function $S$ can then be calculated as
			\begin{align*}
				S(t_{11},\dots,t_{1d_1},\dots,t_{J1},\dots,t_{Jd_J})=\prod_{j=1}^J\prod_{i=1}^{d_j}\frac{\psi_{J_{j,t_{j(i)}}-J_{j,t_{j(i-1)}}}(d_j-i+1)}{\psi_{J_{j,t_{j(i)}}}(1)}S_{ji}(t_{ji}),
			\end{align*}
			where $t_{j(i)}$ denotes the $i$th smallest value of all components $\{t_{j1},\dots,t_{jd_j}\}$ in sector $j$. The corresponding copula $C$ is thus given by
			\begin{align*}
				C(\bm{u})=\prod_{j=1}^J\prod_{i=1}^{d_j}\frac{\psi_{J_{j,\Sinv{j\cdot\,}{j\cdot\,}{-0.5mm}_{(i)}}-J_{j,\Sinv{j\cdot\,}{j\cdot\,}{-0.5mm}_{(i-1)}}}(d_j-i+1)}{\psi_{J_{j,\Sinv{j\cdot\,}{j\cdot\,}{-0.5mm}_{(i)}}}(1)}u_{ji},
			\end{align*}
			which is a product of Sibuya copulas and therefore itself of the Sibuya type. Therefore, Sibuya copulas are able to capture such hierarchical default dependencies.
			\subsection{Generalization to dependent trigger variables}\label{sec.dep.trig}
			Now let us introduce dependence among the default triggers via $\bm{U}\sim C_{\bm{U}}$, i.e., the trigger variables $U_i$, $i\in\{1,\dots,d\}$, are dependent according to the copula $C_{\bm{U}}$. Note that this does not influence the marginal distributions $S_i$, $i\in\{1,\dots,d\}$, as given in (\ref{S.i}). Redoing the calculations as carried out in the proof of Theorem \ref{main.theorem} leads to joint survival function
			\begin{align*}
			   S(\bm{t})&=\IE\bigl[C_{\bm{U}}\bigl(\exp(-(M_1(t_1)+J_{t_1}),\dots,\exp(-(M_d(t_d)+J_{t_d}))\bigr)\bigr]\\
				&=\IE\biggl[C_{\bm{U}}\biggl(\frac{S_1(t_1)}{\psi_{J_{t_1}}(1)}\exp(-J_{t_1}),\dots,\frac{S_d(t_d)}{\psi_{J_{t_d}}(1)}\exp(-J_{t_d})\biggr)\biggr].
			\end{align*}
			The corresponding copula $C$ is therefore given by
			\begin{align*}
					C(\bm{u})=\IE\biggl[C_{\bm{U}}\biggl(\frac{\exp(-J_{S_1^-(u_1)})}{\psi_{J_{S_1^-(u_1)}}(1)}u_1,\dots,\frac{\exp(-J_{S_d^-(u_d)})}{\psi_{J_{S_d^-(u_d)}}(1)}u_d\biggr)\biggr].
			\end{align*}
			\begin{example}
				Let us assume the homogeneous case, i.e., assume that, pointwise, $M_i=M_1$ for all $i\in\{2,\dots,d\}$. This implies that, pointwise, $S_i=S_1$, $i\in\{2,\dots,d\}$. As an example where the copula $C$ is given explicitly, consider $C_{\bm{U}}(\bm{u})=\alpha M(\bm{u})+(1-\alpha)\Pi(\bm{u})$ for $\alpha\in[0,1]$, i.e., $C_{\bm{U}}(\bm{u})$ is a convex combination of the upper Fr\'{e}chet bound copula $M$ and the independence copula $\Pi$. In this case, applying Theorem \ref{main.theorem} leads to
				\begin{align*}
					S(\bm{t})&=\IE\Bigl[\alpha\min_{i}\{\exp(-(M_1(t_i)+J_{t_i}))\}+(1-\alpha)\prod_{i=1}^d\exp(-(M_1(t_i)+J_{t_i}))\Bigr]\\
						&=\alpha\IE[\min_{i}\{\exp(-(M_1(t_i)+J_{t_i}))\}]+(1-\alpha)\IE\Bigl[\prod_{i=1}^d\exp(-(M_1(t_i)+J_{t_i}))\Bigr]\\
						&=\alpha\IE[\exp(-(M_1(t_{(d)})+J_{t_{(d)}}))]\\
						&\phantom{={}}+(1-\alpha)\prod_{i=1}^d\frac{\psi_{J_{t_{(i)}}-J_{t_{(i-1)}}}(d-i+1)}{\psi_{J_{t_{(i)}}}(1)}S_1(t_i)\\
						&=\alpha S_1(t_{(d)})+(1-\alpha)\prod_{i=1}^d\frac{\psi_{J_{t_{(i)}}-J_{t_{(i-1)}}}(d-i+1)}{\psi_{J_{t_{(i)}}}(1)}S_1(t_i)\\
						&=\alpha\min_{i}S_1(t_i)+(1-\alpha)\prod_{i=1}^d\frac{\psi_{J_{t_{(i)}}-J_{t_{(i-1)}}}(d-i+1)}{\psi_{J_{t_{(i)}}}(1)}S_1(t_i)
				\end{align*}
				The copula corresponding to $S$ is therefore given by
				\begin{align*}
					C(\bm{u})&=\alpha M(\bm{u})+(1-\alpha)\prod_{i=1}^d\frac{\psi_{J_{\Sinv{1}{\cdot\,}{-1.3mm}_{(i)}}-J_{\Sinv{1}{\cdot\,}{-1.3mm}_{(i-1)}}}(d-i+1)}{\psi_{J_{\Sinv{1}{\cdot\,}{-1.3mm}_{(i)}}}(1)}u_i\\
								&=\alpha M(\bm{u})+(1-\alpha)\prod_{i=1}^d\frac{\psi_{J_{S_1^-(u_{(d-i+1)})}-J_{S_1^-(u_{(d-i+2)})}}(d-i+1)}{\psi_{J_{S_1^-(u_{(d-i+1)})}}(1)}u_i,
				\end{align*}
				where $u_{d+1}:=1$. Thus, we recognize that $C$ is a convex combination of the upper Fr\'{e}chet bound copula $M$ and the Sibuya copula as given in (\ref{C}) for the homogeneous case.
			\end{example}
	\section{Conclusion}\label{sec.concl}
		We introduced an intuitive default model which extends the classical intensity-based approach by allowing the survival process to jump downwards, i.e., to be stochastic. We then derived the survival distribution and, as a corollary, the associated copula which is proven to be of Sibuya type. For that reason, they are named \textit{Sibuya copulas}. Since the parameters of the marginal survival functions of the default times appear in the copula, Sibuya copulas allow for asymmetries. Due to the jump process in the construction, they allow for a singular component. We also showed that Sibuya copulas may be extreme-value copulas or L\'evy-frailty copulas, depending on the functional parameters chosen. From the construction principle presented, a sampling algorithm for these copulas is derived. Further, properties including positive lower orthant dependence, tail dependence, and extremal dependence are investigated. A financial application consisting in the pricing of first-to-default swaps is given, and the nice-looking related expressions further emphasize again the interesting tractability of the model. This tractability most likely results from the relatively simple form of the integrated intensity process (although corresponding to a quite general setup) combined to the Sibuya property. Finally, we showed that the dependence model easily extends in various ways, and, as an illustration, explicitly derived the construction of Sibuya copulas for two of them. 
	\appendix
	\section{Proofs}
	  \subsection{Proof of Lemma \ref{lemma}}\label{app.lemma}
	  \begin{proof}
	    For Part \ref{p.1}, $\sum_{k=1}^d(d-k+1)a_{(k-1)}=\sum_{k=0}^{d-1}(d-k)a_{(k)}=\sum_{k=1}^{d-1}(d-k)a_{(k)}=\sum_{k=1}^{d}(d-k)a_{(k)}$ implies
	    \begin{align*}
	      \sum_{k=1}^d(d-k+1)(a_{(k)}-a_{(k-1)})&=\sum_{k=1}^d(d-k+1)a_{(k)}-\sum_{k=1}^d(d-k+1)a_{(k-1)}\\
	      &=\sum_{k=1}^d(d-k+1)a_{(k)}-\sum_{k=1}^d(d-k)a_{(k)}=\sum_{k=1}^da_{(k)}\\
	&=\sum_{k=1}^da_k.
	    \end{align*}
	    For Part \ref{p.2}, $\sum_{k=1}^d(a_{(k)}-a_{(k-1)})=a_{(d)}$ and $\sum_{k=1}^db_ka_{(k-1)}=\sum_{k=0}^{d-1}b_{k+1}a_{(k)}=\sum_{k=1}^{d-1}b_{k+1}a_{(k)}=c\sum_{k=1}^{d-1}b_{k}a_{(k)}$ imply
	    \begin{align*}
	      \sum_{k=1}^d(1-b_k)(a_{(k)}-a_{(k-1)})&=a_{(d)}-\sum_{k=1}^db_k(a_{(k)}-a_{(k-1)})\\
	      &=a_{(d)}-\sum_{k=1}^db_ka_{(k)}+\sum_{k=1}^db_ka_{(k-1)}\\
	      &=a_{(d)}-b_da_{(d)}-\sum_{k=1}^{d-1}b_ka_{(k)}+c\sum_{k=1}^{d-1}b_{k}a_{(k)}\\
	      &=(1-b_{d})a_{(d)}+(c-1)\sum_{k=1}^{d-1}b_ka_{(k)}\\
			&=(1-cb_{d})a_{(d)}+(c-1)\sum_{k=1}^{d}b_ka_{(k)}.\qedhere
	    \end{align*}
	  \end{proof}
	  \subsection{Proof of Theorem \ref{main.theorem}}\label{app.main}
		\begin{proof}
			First consider the $i$th marginal survival function $S_i(t)=\IP(\tau_i>t)$, $t\in[0,\infty]$. By conditioning on $J_t$, it can be computed via
			\begin{align}
			    S_i(t)&=\IP(p_i(t)\ge U_i)=\IP(\exp(-(M_i(t)+J_t))\ge U_i)\notag\\
						 &=\IE[\IP(\exp(-(M_i(t)+J_t))\ge U_i\,\vert\,J_t)]=\IE[\exp(-(M_i(t)+J_t))]\notag\\
						 &=\exp(-M_i(t))\IE[\exp(-J_t)]=\exp(-M_i(t))\psi_{J_t}(1).\label{S.i}
			\end{align}
			Given this result and Lemma \ref{lemma} \ref{p.1} with $a_i:=J_{t_i}$, $i\in\{1,\dots,d\}$, the joint survival function of the default times can be computed via
			\begin{align*}
				S(\bm{t})&=\IP(p_i(t_i)\ge U_i,\ i\in\{1,\dots,d\})\\
				&=\IP(\exp(-(M_i(t_i)+J_{t_i}))\ge U_i,\ i\in\{1,\dots,d\})\\
				&=\IE[\IP(\exp(-(M_i(t_i)+J_{t_i}))\ge U_i,\ i\in\{1,\dots,d\}\,\vert\,J_{t_i},\ i\in\{1,\dots,d\})]\\
				&=\IE\biggl[\,\prod_{i=1}^d\exp(-(M_i(t_i)+J_{t_i}))\biggr]=\IE\biggl[\exp\biggl(-\sum_{i=1}^dJ_{t_i}\biggr)\biggr]\prod_{i=1}^d\exp(-M_i(t_i))\\
				&=\IE\biggl[\exp\biggl(-\sum_{i=1}^d(d-i+1)(J_{t_{(i)}}-J_{t_{(i-1)}})\biggr)\biggr]\prod_{i=1}^d\frac{S_i(t_i)}{\psi_{J_{t_i}}(1)}\\
				&=\prod_{i=1}^d\frac{\psi_{J_{t_{(i)}}-J_{t_{(i-1)}}}(d-i+1)}{\psi_{J_{t_i}}(1)}S_i(t_i)=\prod_{i=1}^d\frac{\psi_{J_{t_{(i)}}-J_{t_{(i-1)}}}(d-i+1)}{\psi_{J_{t_{(i)}}}(1)}S_i(t_i),
			\end{align*}
			where, in the second last equality, the independence assumption between any non-overlapping increments of the jump process	is used.
		\end{proof}
		\subsection{Proof of Proposition \ref{prop.extreme}}\label{app.prop}
		\begin{proof}
	    The survival copula $\hat{C}$ corresponding to $C$ can be recovered from $C$ by the Poincar{\'e}-Sylvester sieve formula. This implies
	    \begin{align*}
	        \hat{C}(1-u,\dots,1-u)&=\sum_{I\subseteq\{1,\dots,d\}}(-1)^{\vert I\vert}C(u^{\I_I(1)},\dots,u^{\I_I(d)})\\
	        &=\sum_{I\subseteq\{1,\dots,d\}}(-1)^{\vert I\vert}\prod_{i=1}^{d}u^{\I_I(i)}(u^{-\lambda\I_{I}(\cdot\,)/\lambda_{\cdot}})_{(i)}^{(1-e^{-H})(1-e^{-H(d-i)})},
	    \end{align*}
	    where $\I_I(i)$ denotes the indicator of $i$ being in $I$, $i\in\{1,\dots,d\}$. Now consider the term $(u^{-\lambda\I_{I}(\cdot\,)/\lambda_{\cdot}})_{(i)}$. If there is no $k\in\{1,\dots,d\}$ such that $k\notin I$, i.e., if $I=\{1,\dots,d\}$, then $(u^{-\lambda\I_{I}(\cdot\,)/\lambda_{\cdot}})_{(i)}=(u^{-\lambda/\lambda_{\cdot}})_{(i)}=u^{-\lambda/\lambda_{[i]}}$. If there is precisely one $k\in\{1,\dots,d\}$ such that $k\notin I$, then $(u^{-\lambda\I_{I}(\cdot\,)/\lambda_{\cdot}})_{(i)}=u^{-\lambda/\lambda_{[i]}}$ if $i\in\{2,\dots,d\}$ and $1$ if $i=1$. If there are precisely two different $k_1,k_2$ in $\{1,\dots,d\}$ such that $k_1,k_2\notin I$, then $(u^{-\lambda\I_{I}(\cdot\,)/\lambda_{\cdot}})_{(i)}=u^{-\lambda/\lambda_{[i]}}$ if $i\in\{3,\dots,d\}$ and $1$ if $i\in\{1,2\}$. Continuing this way one obtains that 
	    \begin{align*}
	        (u^{-\lambda\I_{I}(\cdot\,)/\lambda_{\cdot}})_{(i)}=\begin{cases}
	          1,&i\in\{1,\dots,\vert I\vert\},\\
	          u^{-\lambda/\lambda_{[i]}},&i\in\{\vert I\vert+1,\dots,d\}.
	        \end{cases}
	    \end{align*}
	   This implies that
	   \begin{align*}
	       \hat{C}(1-u,\dots,1-u)&=\sum_{I\subseteq\{1,\dots,d\}}(-1)^{\vert I\vert}u^{\vert I\vert}\prod_{i=\vert I\vert+1}^{d}u^{-(\lambda/\lambda_{[i]})(1-e^{-H})(1-e^{-H(d-i)})}\\
	       &=\sum_{I\subseteq\{1,\dots,d\}}(-1)^{\vert I\vert}u^{\vert I\vert-\lambda(1-e^{-H})\sum_{\vert I\vert+1}^d(1-e^{-H(d-i)})/\lambda_{[i]}}\\
	       &=\sum_{I\subseteq\{1,\dots,d\}}(-1)^{\vert I\vert}u^{\vert I\vert-c\sum_{\vert I\vert+1}^da_i},
	   \end{align*}
	   where $c:=\lambda(1-e^{-H})$ and $a_i:=(1-e^{-H(d-i)})/\lambda_{[i]}$, $i\in\{1,\dots,d\}$. Finally, note that
	   \begin{align*}
	      C(u,\dots,u)=u^{d-c\sum_{i=1}^da_i}.
	    \end{align*}
	    With these two ingredients, we obtain
	    \begin{align*}
	        \eps_l&=\lim_{u\downarrow0}\frac{u^{d-c\sum_{i=1}^da_i}}{1-\hspace{-2mm}\sum\limits_{I\subseteq\{1,\dots,d\}}\hspace{-2mm}(-1)^{\vert I\vert}u^{\vert I\vert-c\sum_{i=\vert I\vert+1}^da_i}},\\
	 \eps_u&=\lim_{u\uparrow1}\frac{\sum\limits_{I\subseteq\{1,\dots,d\}}(-1)^{\vert I\vert}u^{\vert I\vert-c\sum_{i=\vert I\vert+1}^da_i}}{1-u^{d-c\sum_{i=1}^da_i}}.
	    \end{align*}
	    The first statement now directly follows from the formula for $\eps_l$. For the second statement, apply l'H\^opital's Rule.
	  \end{proof}
\bibliographystyle{plainnat}
\bibliography{./mybibliography}
\end{document}